\newcommand{\mysetminus}{\ensuremath{-}}
\newtheorem{prob}{Problem}
\begin{document}

\markboth{W.~DeMeo, R.~Freese, M.~Valeriote}
{Tests for Difference Terms}

%
\catchline{}{}{}{}{}
%

\title{POLYNOMIAL-TIME TESTS FOR DIFFERENCE TERMS IN IDEMPOTENT VARIETIES}

\author{WILLIAM DEMEO}
\address{Mathematics Department, University of Colorado\\Boulder, Colorado 80309, USA\\
\email{williamdemeo@gmail.com}}

\author{RALPH FREESE}
\address{Mathematics Department, University of Hawaii\\Honolulu, Hawaii 96822, USA\\
\email{ralph@math.hawaii.edu}}

\author{MATTHEW VALERIOTE}
\address{Mathematics Department, McMaster University\\Hamilton, Ontario L8S 4K1, CAN\\
\email{matt@math.mcmaster.ca}}


\maketitle

\begin{history}
\received{(Day Month Year)}
\accepted{(Day Month Year)}
\comby{[editor]}
\end{history}

\begin{abstract}
We consider the following practical question: given a finite
algebra $\alg{A}$ in a
finite language, can we efficiently decide whether the variety
generated by $\alg{A}$
has a difference term?  We answer this question (positively) in the
idempotent case and then describe algorithms for constructing difference
term operations.
\end{abstract}

\section{Introduction}
\label{sec:introduction}

A \defn{difference term} for a variety $\sV$ is a ternary term $d$ in the
language of $\sV$ that satisfies the following:
if $\alg{A} = \<A, \dots \> \in \sV$, then for all $a, b \in A$ we have
\begin{equation}
\label{eq:3}
d^{\alg{A}}(a,a,b) = b \quad \text{ and } \quad
d^{\alg{A}}(a,b,b) \mathrel{\comm \theta \theta} a,
\end{equation}
where $\theta$ is any congruence 
containing $(a,b)$
and $[\cdot, \cdot]$ denotes the \defn{commutator}.
When the relations in (\ref{eq:3}) hold for a single algebra $\alg{A}$ and term $d$ we call $d^{\alg{A}}$
a \defn{difference term operation} for $\alg{A}$.

Difference terms are studied extensively in the general algebra literature.
(See, for example, \cite{MR1358491,MR1663558,MR3076179,KSW,MR3449235}.)
There are many reasons to study difference terms, but
one obvious reason is because if we know that a variety
has a difference term, this fact allows us to deduce some useful
properties of the algebras inhabiting that variety.
In particular, varieties with a difference term have a commutator that behaves nicely.  

The class of varieties that have a difference term is fairly broad and includes those varieties that are congruence modular or congruence meet-semidistributive. Since the commutator of two congruences of an algebra in a congruence meet-semidistributive variety is just their intersection~\cite{MR1663558}, it follows that the term $d(x,y,z) := z$ is a difference term for such varieties.  A special type of difference term $d(x,y,z)$ is one that satisfies the equations $d(x,x,y) = y$ and $d(x,y,y) = x$.  Such terms are called \emph{\malcev\ terms}. So if $\alg A$ lies in a variety that has a difference term $d(x,y,z)$ and if $\alg A$ is \emph{abelian} (i.e., $[1_A, 1_A] = 0_A$), then $d$ will be a \malcev\ term for $\alg A$.


Difference terms also play a role in recent work of Keith Kearnes,
Agnes Szendrei, and Ross Willard.
In~\cite{MR3449235} these authors give a positive answer
J\'onsson's famous question---whether a variety of finite
residual bound must be finitely
axiomatizable---for the
special case in which the variety has a difference
term.\footnote{To say a variety has \emph{finite residual bound} is to say
  there is a finite bound on the size of the subdirectly irreducible
  members of the variety.}

Computers have become invaluable as a research tool and have helped to
broaden and deepen our understanding of algebraic structures and the
varieties they inhabit.  This is largely due to the efforts
of researchers who, over the last three decades, have found ingenious
ways to coax computers into solving challenging abstract algebraic
decision problems, and to do so very quickly.
To give a couple of examples related to our own work,
it is proved in~\cite{MR3239624} (respectively,~\cite{Freese:2009})
that deciding whether a finite idempotent algebra generates a variety that is congruence-$n$-permutable
(respectively, congruence-modular) is \emph{tractable}.\footnote{To
  say that the decision problem is \emph{tractable} is to say
  that there exists an algorithm for solvng the problem that ``scales
  well'' with respect to increasing input size, by which we mean that
  the number of operations required to reach a correct decision is
  bounded by a polynomial function of the input size.}
The present paper continues this effort by presenting an efficient
algorithm for deciding whether a finitely generated idempotent variety has a
difference term.

The question that motivated us to begin this project, and
whose solution is the main subject of this paper, is the following:
\begin{prob}
  \label{prob:1}
  Is there a polynomial-time algorithm to decide for a finite,
  idempotent algebra $\alg{A}$ if $\bbV(\alg{A})$ has a difference term?
\end{prob}

We note that for arbitrary finite algebras $\alg{A}$, the problem of deciding if $\bbV(\alg{A})$ has a difference term is an EXP-time complete problem.  This follows from Theorem 9.2 of \cite{Freese:2009}.

The remainder of this introduction uses the language of \emph{tame congruence theory} (abbreviated by \tct).  Many of the terms we use are defined and explained
in the next section.  For others, see~\cite{HM:1988}.

Our solution to Problem~\ref{prob:1} exploits the connection
between difference terms and \tct that was established
by Keith Kearnes in~\cite{MR1358491}.

\begin{theorem}[{\protect\cite[Theorem 1.1]{MR1358491}}]
\label{thm:KearnesThm}
The variety $\sV = \bbV(\alg A)$ generated by a
finite algebra $\alg A$ has a difference term  if and only if
$\sV$ omits \tct-type \utyp and, for all finite algebras
$\alg B \in \sV$,
the minimal sets of every type~\atyp\ prime interval in
$\op{Con}(\alg B)$ have empty tails.
\end{theorem}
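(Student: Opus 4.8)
The plan is to prove the two directions of the equivalence separately, using the local structure theory of finite algebras together with the commutator-theoretic consequences of a difference term. Throughout I would fix a finite algebra $\alg B \in \sV$ and a prime quotient $\langle \alpha, \beta \rangle$ in $\op{Con}(\alg B)$, and pass to the algebra induced on a $\langle \alpha, \beta \rangle$-minimal set $U$, where the \tct-type of the quotient and the decomposition of $U$ into its body and tail become visible.

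For the forward direction I would exploit the fact that a difference term makes every abelian algebra in $\sV$ affine: on any $\alg B$ with $[1_B,1_B]=0_B$ the relation $d(a,b,b) \mathrel{\comm \theta \theta} a$ collapses to $d(a,b,b)=a$, so $d$ is a \malcev\ operation and $\alg B$ is polynomially equivalent to a module. To see that $\sV$ omits \tct-type \utyp, I would note that the presence of a type-\utyp\ prime quotient produces an abelian but \emph{non}-affine algebra in $\sV$ (its trace algebra is essentially a proper $G$-set), contradicting the affineness just obtained. For the empty-tails condition on type-\atyp\ quotients, I would analyze a minimal set $U$ and argue that $d$, acting as a \malcev\ operation on the affine body, is forced by the term condition to extend homogeneously across $U$; a nonempty tail would carry a polynomial exhibiting non-affine behavior incompatible with this action, so the tail must be empty.

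For the reverse direction I would translate the two hypotheses into the existence, on each finite member of $\sV$, of local \malcev\ polynomials on the relevant minimal sets, and then assemble these into a single ternary term. Concretely, omitting \tct-type \utyp\ together with the empty-tails condition for type \atyp\ should yield a strong term (centralizer) condition that I expect to be equivalent to a \malcev\ condition characterizing difference terms. One then builds a candidate operation $d$ on the $\sV$-free algebra on three generators $x,y,z$ and verifies both that $d(x,x,y)=y$ holds identically and that $d(x,y,y) \mathrel{\comm \theta \theta} x$, checking the commutator identity on the subdirectly irreducible factors, where the local structure theory applies.

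The hard part will be the reverse direction, and in particular the passage from \malcev\ operations defined only on minimal sets to a single global term valid throughout $\sV$. The local operations need not patch together directly, and the commutator identity $d(a,b,b) \mathrel{\comm \theta \theta} a$ must be controlled uniformly over every (arbitrarily large) finite algebra in the variety. Surmounting this will require leaning on the uniformity of the ``for all finite $\alg B$'' hypothesis together with a compactness argument, and an induction on the length of the congruence interval that reduces the global commutator condition to the prime-quotient level where \tct\ applies.
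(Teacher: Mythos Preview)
The paper does not prove this theorem at all: it is stated with a citation to Kearnes~\cite[Theorem~1.1]{MR1358491} and used throughout as a black box. There is consequently no ``paper's own proof'' to compare your proposal against. The only place the paper touches on the argument is in the proof of Proposition~\ref{prop:alt-poly-alg}, where it refers to one half of Kearnes' original proof (that a difference term forces empty type-\atyp\ tails) and observes that a slightly weaker hypothesis suffices there.

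As for the content of your proposal itself: your forward direction is along the right lines---the key really is that a difference term makes abelian quotients affine, which rules out type~\utyp\ and, with more work on the induced algebra on a minimal set, forces empty tails at type~\atyp. Your reverse direction, however, is more of a wish list than a plan. The sentence ``omitting \tct-type \utyp\ together with the empty-tails condition for type \atyp\ should yield a strong term condition that I expect to be equivalent to a \malcev\ condition characterizing difference terms'' is precisely the theorem you are trying to prove, so invoking it is circular. Kearnes' actual argument for this direction does not proceed by patching local \malcev\ polynomials; it goes through a structural analysis showing that the hypotheses force $\sV$ to satisfy a certain congruence identity (related to the failure of rectangulation / the ``meet-semidistributive over solvable'' decomposition), from which the existence of a difference term follows by a separate \malcev-condition argument. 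If you want to reconstruct the proof, you should consult~\cite{MR1358491} directly rather than attempt the local-to-global assembly you describe, which does not obviously terminate in a single term for the whole variety.
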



It follows from an observation of Bulatov that the problem of deciding if a finite idempotent algebra generates a variety that omits \tct-type \utyp is tractable (see Proposition 3.1 of \cite{MR2504025}).
In~\cite{Freese:2009}, the second and third authors of this paper solve an
analogous problem by giving a positive answer to the following:
\begin{prob}
  \label{prob:2}
  Is there a polynomial-time algorithm to decide for a finite,
  idempotent algebra $\alg{A}$ if $\bbV(\alg{A})$ is congruence modular?
\end{prob}

Congruence modularity is characterized by omitting tails and
\tct-types \utyp and \styp.
Omitting \utyp's and \styp's can be efficiently decided using Corollary~\ref{cor:2.2}. 
It is also proved in~\cite{Freese:2009} that if there is a nonempty tail in $\bbV(\alg{A})$, then there is a nonempty tail in a ``small'' algebra. This is then used to show that the problem of deciding if a finite idempotent algebra generates a congruence modular variety is tractable. More precisely, suppose $\alg{A}$ is a finite idempotent algebra, and suppose
$\bbV(\alg{A})$ has nonempty tails but lacks \utyp's and \styp's. Then a nonempty tail must occur in a 3-generated subalgebra of $\alg{A}^2$. A key fact used in the proof of this is that a finite algebra in a locally finite variety that omits \utyp's and \styp's has a congruence lattice that---modulo the {\it solvability congruence} ---is (join) semidistributive.


%



\section{Background, definitions, and notation}
\label{sec:defin-notat}
Our starting point is the set of lemmas at the beginning of Section 3 of~\cite{Freese:2009}.
We first review some of the basic \ac{tct}
that comes up in the proofs in that paper. (In fact, most of this section
is based on~\cite[Section~2]{Freese:2009}.)

The seminal reference for \tct is the book by Hobby and McKenzie
\cite{HM:1988}, according to which,
for each covering $\alpha \prec \beta$ in the congruence lattice of a finite
algebra $\alg{A}$, the local behavior of the $\beta$-classes is captured by the
so-called $(\alpha, \beta)$-traces~\cite[Def.~2.15]{HM:1988}.
Modulo $\alpha$, the induced structure on the traces is limited to one
of five possible types:

\begin{enumerate}[{\bf 1}]
\item  (unary type) an algebra whose basic operations are permutations;
\item  (affine type) a one-dimensional vector space over some finite field;
\item  (boolean type) a 2-element boolean algebra;
\item  (lattice type) a 2-element lattice;
\item  (semilattice type) a 2-element semilattice.
\end{enumerate}

Thus to each covering $\alpha \prec \beta$
corresponds a ``\tct type,'' denoted by $\typ(\alpha, \beta)$,
belonging to the set
$\{\mathbf{1},\mathbf{2},\mathbf{3},\mathbf{4},\mathbf{5}\}$
(see~\cite[Def.~5.1]{HM:1988}).
The set of all \tct types that are realized by covering pairs of congruences of a
finite algebra $\alg{A}$ is denoted by $\typ\{\alg{A}\}$
and called the \emph{typeset} of $\alg{A}$.
If $\sK$ is a class of algebras, then $\typ\{\sK\}$ denotes the union of the typesets of all finite algebras in $\sK$.
\tct types are ordered according to the following ``lattice of types'':

\newcommand{\dotsize}{0.8pt}
\tikzstyle{lat} = [circle,draw,inner sep=\dotsize]
\begin{center}
\newcommand{\figscale}{.7}
\begin{tikzpicture}[scale=\figscale]
  \scalefont{.8}
  \node[lat] (1) at (0,0) {};
  \node[lat] (2) at (-1,1.5) {};
  \node[lat] (3) at (0,3) {};
  \node[lat] (4) at (.8,2.1) {};
  \node[lat] (5) at (.8,.9) {};
  \draw (1) node [below] {$\mathbf{1}$};
  \draw (2) node [left] {$\mathbf{2}$};
  \draw (3) node [above] {$\mathbf{3}$};
  \draw (4) node [right] {$\mathbf{4}$};
  \draw (5) node [right] {$\mathbf{5}$};
  \draw[semithick]
  (1) -- (2) -- (3) -- (4) -- (5) -- (1);
\end{tikzpicture}
\end{center}
For $\alg{A}$ a finite idempotent algebra, whether or not $\bbV(\alg{A})$ omits one of the order ideals of the lattice of types can be
determined locally.  This is spelled out for us in the next proposition.
(A \defn{strictly simple} algebra is a simple
algebra with no non-trivial subalgebras.)

\begin{proposition}[{\protect \cite[Proposition~2.1]{Freese:2009}}]
  \label{prop:2.1}
If $\alg A$ is a finite idempotent algebra and
$\mathbf{i} \in \typ(\bbV(\alg{A}))$ then there
is a finite strictly simple algebra $\bS$ of
type~$\mathbf{j}$ for
some $\mathbf{j} \leq \mathbf{i}$ in $\sansH \sansS (\alg{A})$.
The possible cases are
\begin{itemize}
\item[$\bullet\  \mathbf{j} = \mathbf{1}$] $\;\Rightarrow \;\bS$ is term equivalent to a 2-element set
\item[$\bullet\   \mathbf{j} = \mathbf{2}$] $\;\Rightarrow \;\bS$ is term equivalent to the idempotent reduct of a module
\item[$\bullet\   \mathbf{j} = \mathbf{3}$] $\;\Rightarrow \;\bS$ is functionally complete
\item[$\bullet\   \mathbf{j} = \mathbf{4}$] $\;\Rightarrow \;\bS$ is polynomially equivalent to a 2-element lattice
\item[$\bullet\   \mathbf{j} = \mathbf{5}$] $\;\Rightarrow \;\bS$ is term equivalent to a 2-element semilattice.
\end{itemize} 
\end{proposition}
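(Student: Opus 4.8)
The statement packages two rather different assertions: an \emph{existence} claim---that $\sansH\sansS(\alg A)$ contains a finite strictly simple algebra $\bS$ whose type $\mathbf j$ satisfies $\mathbf j \le \mathbf i$---and a \emph{structural} classification of $\bS$ according to that type. For the structural part I would not attempt an independent argument: any finite strictly simple algebra is governed by Szendrei's classification of such algebras, and since $\alg A$ is idempotent every member of $\sansH\sansS(\alg A)$ is idempotent, so the five bullet points are exactly the idempotent cases of that classification. Indeed, type $\mathbf 1$ forces every unary term operation to be the identity, leaving a bare $2$-element set; type $\mathbf 2$ leaves the idempotent reduct of a module; types $\mathbf 4$ and $\mathbf 5$ collapse to the $2$-element lattice and the $2$-element semilattice; and type $\mathbf 3$ yields a functionally complete algebra. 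The real work is therefore the existence claim, and that is where I would concentrate.

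To produce $\bS$ I would start from the meaning of $\mathbf i \in \typ(\bbV(\alg A))$: some finite $\alg B \in \bbV(\alg A)$ carries a covering $\delta \prec \theta$ in $\op{Con}(\alg B)$ with $\typ(\delta,\theta) = \mathbf i$. I would then localize using tame congruence theory: choose a $(\delta,\theta)$-minimal set $U$ together with a $(\delta,\theta)$-trace $N \subseteq U$, and pass to the algebra induced on $N$ modulo $\delta$. This is a minimal algebra carrying the type-$\mathbf i$ structure, and from it---using idempotency to keep the induced term operations inside the clone of $\alg A$---one extracts a finite strictly simple algebra $\bS$ whose type is some $\mathbf j \le \mathbf i$. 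At this stage $\bS$ lives in $\bbV(\alg A)$, but not yet visibly in $\sansH\sansS(\alg A)$.

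The decisive step is to remove the power and land in $\sansH\sansS(\alg A)$. Write $\bS = \alg D/\psi$ with $\alg D$ a finite subdirect subalgebra of a power $\alg A^n$; since $\bS$ is simple, $\psi$ is a coatom of $\op{Con}(\alg D)$, and the projection kernels satisfy $\bigcap_k \ker\pi_k = 0_D \le \psi$. Because $\bS$ is strictly simple it is generated by any two distinct elements, so I may assume $\alg D$ is two-generated, whence each $\pi_k(\alg D)$ is a two-generated subalgebra of $\alg A$. It now suffices to find a single coordinate $k$ with $\ker\pi_k \le \psi$, for then $\bS$ is a homomorphic image of $\pi_k(\alg D) \in \sansS(\alg A)$, giving $\bS \in \sansH\sansS(\alg A)$. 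This coordinate-elimination is exactly where I expect the difficulty to lie: for a general variety a simple algebra need not descend to $\sansH\sansS$ of its generators---J\'onsson's lemma secures this only under congruence distributivity---so idempotency must be used essentially, through the behavior of the commutator and centralizers on the subdirect product $\alg D$ and through the freedom to drop to a strictly lower type $\mathbf j$ whenever a join $\ker\pi_k \vee \psi$ is forced up to $1_D$. Showing that some coordinate always survives this reduction is the crux of the argument.
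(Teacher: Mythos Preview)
The paper does not give a self-contained proof here: it simply cites Proposition~3.1 of \cite{MR2504025} (Bulatov's observation) for the existence of $\bS$ in $\sansH\sansS(\alg A)$ and Theorem~6.1 of \cite{MR1191235} (Szendrei's classification) for the five bullet points. Your handling of the structural part---invoking the idempotent cases of Szendrei's classification---therefore lines up exactly with what the paper does.

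Your existence argument, however, is not a proof but an outline with an acknowledged gap. You correctly reduce to a subdirect $\alg D \le \alg A^n$ with $\bS = \alg D/\psi$, and you correctly isolate the target as finding a coordinate $k$ with $\ker\pi_k \le \psi$; but you then write that ``showing that some coordinate always survives this reduction is the crux of the argument'' and stop. That step \emph{is} the content of the existence claim, and nothing you have written supplies it. In particular, your remark about ``the freedom to drop to a strictly lower type $\mathbf j$ whenever a join $\ker\pi_k \vee \psi$ is forced up to $1_D$'' is suggestive but not an argument: you have not explained why such a drop is always available, nor why iterating it terminates inside $\sansH\sansS(\alg A)$ rather than in something weaker. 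Since the paper itself defers this to an external reference, you are not being asked to reinvent it here; but as a proof proposal your sketch does not close the gap it identifies.
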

\begin{proof}
  This is a combination of~\cite[Proposition~3.1]{MR2504025} and~\cite[Theorem~6.1]{MR1191235}.
\end{proof}

We conclude this section with a result that will be useful in Section~\ref{sec:freese-valer-lemm}.
\begin{corollary}[{\protect \cite[Corollary~2.2]{Freese:2009}}]
  \label{cor:2.2}
  Let $\alg{A}$ be a finite idempotent algebra and $T$ an order ideal in the
  lattice of types. Then $\bbV(\alg{A})$ omits $T$ if and only if $\sansS(\alg{A})$ does.
\end{corollary}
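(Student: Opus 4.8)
The plan is to prove the two inclusions of typesets separately, one being immediate and the other resting on Proposition~\ref{prop:2.1}. For the easy direction, observe that $\sansS(\alg{A}) \subseteq \bbV(\alg{A})$, so every finite member of $\sansS(\alg{A})$ is a finite member of $\bbV(\alg{A})$ and hence $\typ\{\sansS(\alg{A})\} \subseteq \typ\{\bbV(\alg{A})\}$. Consequently, if $\bbV(\alg{A})$ omits $T$ then so does $\sansS(\alg{A})$. This direction uses neither idempotence nor the hypothesis that $T$ is an order ideal.

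For the converse I would argue contrapositively, producing from a bad type in $\bbV(\alg{A})$ a bad type already present in $\sansS(\alg{A})$. Suppose then that $\bbV(\alg{A})$ does not omit $T$, so some $\mathbf{i} \in T$ belongs to $\typ\{\bbV(\alg{A})\}$. Applying Proposition~\ref{prop:2.1} yields a finite strictly simple algebra $\bS \in \sansH\sansS(\alg{A})$ of type $\mathbf{j}$ for some $\mathbf{j} \leq \mathbf{i}$. Since $T$ is an order ideal and $\mathbf{j} \leq \mathbf{i} \in T$, we have $\mathbf{j} \in T$; this is the single point at which the order-ideal hypothesis is needed.

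It remains to move the type $\mathbf{j}$ from the homomorphic image $\bS$ into an actual subalgebra of $\alg{A}$, which I expect to be the only substantive step. Since $\bS \in \sansH\sansS(\alg{A})$, there is a subalgebra $\alg{B}$ of $\alg{A}$ and a congruence $\theta \in \op{Con}(\alg{B})$ with $\alg{B}/\theta \cong \bS$. As $\bS$ is simple, $\theta$ is a maximal congruence, so $\theta \prec 1_B$ is a covering in $\op{Con}(\alg{B})$. The \tct type of a prime quotient is determined by the induced structure on its minimal sets, which is unchanged under the correspondence between the interval $[\theta, 1_B]$ in $\op{Con}(\alg{B})$ and the bottom covering of $\op{Con}(\alg{B}/\theta)$; hence $\typ(\theta, 1_B) = \mathbf{j}$ (see~\cite{HM:1988}). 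Therefore $\mathbf{j} \in \typ\{\alg{B}\} \subseteq \typ\{\sansS(\alg{A})\}$, so $\typ\{\sansS(\alg{A})\}$ meets $T$ and $\sansS(\alg{A})$ fails to omit $T$. The whole argument is thus routine once Proposition~\ref{prop:2.1} is in hand; the only care required is the type-preservation bookkeeping in this last step, and it involves no computation specific to $\alg{A}$.
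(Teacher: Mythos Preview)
Your argument is correct. Note, however, that the paper does not actually supply a proof of this corollary: it is quoted from~\cite{Freese:2009} and left unproved here, so there is no in-paper argument to compare against. Your derivation---using Proposition~\ref{prop:2.1} to pass from a type $\mathbf{i} \in T \cap \typ\{\bbV(\alg{A})\}$ to a strictly simple $\bS \in \sansH\sansS(\alg{A})$ of type $\mathbf{j} \leq \mathbf{i}$, invoking the order-ideal hypothesis to get $\mathbf{j} \in T$, and then lifting $\mathbf{j}$ into $\typ\{\alg{B}\}$ for the relevant subalgebra $\alg{B} \leq \alg{A}$ via the standard type-preservation of the quotient correspondence---is exactly the natural proof and is how the result is established in the cited source.
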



\section{The Characterization}
\label{sec:freese-valer-lemm}
In~\cite{Freese:2009}, Corollary~\ref{cor:2.2} is the starting point of the
development of a polynomial-time algorithm that determines if a given finite
idempotent algebra generates a congruence modular variety.


According to the characterization
in~\cite[Chapter~8]{HM:1988} of locally finite congruence modular (resp.,
distributive) varieties, a finite algebra $\alg{A}$ generates a congruence modular
(resp., distributive) variety $\sV$ if and only if the typeset
of $\sV$ is
contained in $\{\atyp, \btyp, \ltyp\}$ (resp., $\{\btyp, \ltyp\}$)
and all minimal sets of prime
quotients of finite algebras in $\sV$ have empty
tails~\cite[Def.~2.15]{HM:1988}. (In the distributive
case the empty tails condition is equivalent to the minimal sets all having exactly
two elements.)

It follows from Corollary~\ref{cor:2.2} and Proposition~\ref{prop:2.1}
that if $\alg{A}$ is idempotent then one can
test the first condition---omitting
types \utyp, \styp (resp., \utyp, \atyp, \styp)---by searching
for a 2-generated subalgebra of $\alg{A}$ whose typeset is
not contained in
$\{\atyp, \btyp, \ltyp\}$ (resp., $\{\btyp, \ltyp\}$). It is proved
in~\cite[Section~6]{Freese:2009} that this
test can be performed in polynomial-time---that is, the running
time of the test is bounded by a polynomial function of the size of $\alg{A}$.
The main tools developed to this end are presented
in~\cite[Section~3]{Freese:2009} as a sequence of
lemmas that are used 
to prove the following:
if $\alg{A}$ is finite and idempotent, and if
$\mathcal V = \bbV(\alg{A})$ omits types \utyp and \styp,
then to test for the existence of nonempty tails
in $\sV$ it suffices to look for them
in the 3-generated subalgebras of $\alg{A}^2$.
In other words, either there are no nonempty tails
or else there are nonempty tails that are easy to find
(since they occur in a 3-generated subalgebra of $\alg{A}^2$).
It follows that Problem~\ref{prob:2} has a positive answer:
deciding whether or not a finite idempotent algebra generates a congruence
modular variety is tractable.%

Our goal is to use the same strategy to solve Problem~\ref{prob:1}.
As such, we revisit each of the lemmas in Section 3 of \cite{Freese:2009},
and consider whether an analogous result can be proved under
modified hypotheses.
Specifically, we retain the assumption that the type set of $\bbV(\alg{A})$
omits \utyp, but we drop the assumption that it omits \styp.
We will  prove that, under these circumstances, 
either there are no type \atyp tails in $\bbV(\alg{A})$ (so the latter has a difference term), or else type \atyp tails can be found ``quickly,''
as they will occur in a 3-generated subalgebra of $\alg{A}^2$.
Where possible, we will relate our new results to
analogous results in~\cite{Freese:2009}.

\subsection{Notation}
Throughout we let $\nn$ denote the set $\{0,1,\dots, n-1\}$ and
we take $\sS$ to be a finite set of finite,
similar, idempotent algebras that is closed under the taking of
subalgebras, and we assume that the type set of
$\sV = \bbV(\sS)$ omits \utyp (but may include \styp).
If there exists a finite algebra in $\sV$ having a type \atyp\ minimal
set with a nonempty tail---in which case we say that
``$\sV$ has type \atyp tails''---then,
by standard results in \tct (see~\cite{HM:1988}),
at least one such algebra appears as a subalgebra of a finite product of
elements in $\sS$.
So we suppose that some finite algebra
$\alg{B}$ in $\sV$ has a prime quotient of type~\atyp with
minimal sets that have
nonempty tails and show that there is a 3-generated
subalgebra of the
product of two members of $\sS$ with this property (which can be found ``quickly'').

Since $\sS$ is closed under the taking of subalgebras,
we may assume that the algebra $\alg{B}$ from the previous paragraph is a subdirect
product of a finite number of members of $\sS$. Choose $n$ minimal such that for
some $\alg{A}_0$, $\alg{A}_1$, $\dots$, $\alg{A}_{n-1}$ in $\sS$, there is a subdirect
product $\alg{B} \sdp \prod_{\nn} \alg{A}_i$
that has a prime quotient of type~\atyp\ whose minimal sets have
nonempty tails.
Under the assumption that $n > 1$ we will prove that $n = 2$.

For this $n$, select the $\alg{A}_i$ and $\alg{B}$ so that $|B|$ is as small as possible.
Let $\alpha \prec \beta$ be a prime quotient of $\alg{B}$
of type~\atyp\ whose minimal sets have
nonempty tails, and choose $\beta$ minimal with respect to this property.
By~\cite[Lemma 6.2]{HM:1988}, this implies $\beta$ is join
irreducible and $\alpha$ is its unique subcover.
Let $U$ be an $\<\alpha, \beta\>$-minimal set.

For $i < n$, we let $\rho_i$ denote the kernel of the projection
of $\alg{B}$ onto $\alg{A}_i$, so $\alg{B}/\rho_i \cong \alg{A}_i$.
For a subset $\sigma \subseteq \nn$, define
\[
\rho_\sigma := \bigwedge_{j\in \sigma} \rho_j.
\]
Consequently,
$\rho_{\nn} = \bigwedge_{j\in \nn}\rho_j = 0_{B}$.
By minimality of $n$ we know that the intersection of any  proper subset of the
$\rho_i$, $0\leq i < n$ is strictly above $0_B$.  Thus,
$0_B < \rho_\sigma < 1_B$ for all
$\emptyset \subset \sigma\subset \nn$.
(By $\subset$ we mean \emph{proper} subset.)

The next four lemmas assume the context above, which for convenience
we will denote by $\Gamma$; that is,
``\emph{Assume $\Gamma$}'' will mean ``Assume $\sS$, $\alg{B}$, $n$, $\{\alg{A}_i \colon i < n\}$,
$\alpha$, $\beta$, $U$, and $\rho_\sigma$ are as described above.''

\begin{lemma}[{\protect cf.~\cite[Lemma~3.1]{Freese:2009}}]
\label{lem:fv_3-1}
Assume $\Gamma$. If\/ $0$, $1 \in U$, if $(0,1) \in \beta \mysetminus \alpha$, and if
$t$ belongs to the tail of $U$, then $\beta$ is the congruence of $\alg{B}$
generated by the pair $(0,1)$, and $\alg{B}$ is generated by $\{0, 1, t\}$.
\end{lemma}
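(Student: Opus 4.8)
The plan is to treat the two assertions separately, since the first follows purely from the order-theoretic data in $\Gamma$. Let $\theta$ be the congruence of $\alg{B}$ generated by $(0,1)$. Because $(0,1)\in\beta$ we have $\theta\le\beta$, and because $(0,1)\notin\alpha$ we have $\theta\not\le\alpha$. Recall that in $\Gamma$ the congruence $\beta$ is join irreducible with unique lower cover $\alpha$. I would first record the consequence that every congruence strictly below $\beta$ lies below $\alpha$: if $\gamma<\beta$ but $\gamma\not\le\alpha$, then $\alpha<\gamma\vee\alpha\le\beta$, and since $\alpha\prec\beta$ this forces $\gamma\vee\alpha=\beta$, exhibiting $\beta$ as a join of two strictly smaller congruences and contradicting join irreducibility. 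Applying this to $\theta$, which is not below $\alpha$, rules out $\theta<\beta$ and leaves $\theta=\beta$. This proves the first assertion.

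For the second assertion I would let $\alg{B}'$ denote the subalgebra of $\alg{B}$ generated by $\{0,1,t\}$ and show $\alg{B}'=\alg{B}$ by playing it off against the minimality built into $\Gamma$. For each $i<n$, let $\alg{A}_i'$ be the image of $\alg{B}'$ under the projection onto $\alg{A}_i$; since $\mathcal{S}$ is closed under subalgebras we have $\alg{A}_i'\in\mathcal{S}$, and by construction $\alg{B}'$ is a subdirect product of the $\alg{A}_i'$. Suppose for the moment that $\alg{B}'$ again has a prime quotient of type \atyp\ whose minimal sets have nonempty tails. If some $\alg{A}_i'$ were trivial then $\alg{B}'$ would realize this property as a subdirect product of fewer than $n$ members of $\mathcal{S}$, contradicting the minimality of $n$; hence $\alg{B}'$ is a genuine $n$-fold subdirect product of members of $\mathcal{S}$ with the defining property of $\Gamma$, and the minimality of $|B|$ for this $n$ gives $|B'|\ge|B|$. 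Since $\alg{B}'\le\alg{B}$, this yields $\alg{B}'=\alg{B}$, as required.

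Thus the whole statement reduces to its genuine content, which I expect to be the main obstacle: verifying that the three generators $0,1,t$ already force a type \atyp\ prime quotient of $\alg{B}'$ with a nonempty tail. Here I would take $0,1$ as an $\langle\alpha,\beta\rangle$-trace of type \atyp\ and $t$ as a tail element of the minimal set $U$, and analyze $U\cap B'$ as a candidate minimal set for $\alg{B}'$, with the congruence $\theta'$ of $\alg{B}'$ generated by $(0,1)$ playing the role that $\theta=\beta$ played above. The difficulty is that minimal sets are defined through idempotent unary polynomials of $\alg{B}$ whose parameters need not lie in $B'$, so $U$ does not restrict to $\alg{B}'$ for free. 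I plan to circumvent this by using idempotence to keep the operations witnessing the affine (type \atyp) structure of the trace $\{0,1\}$ and the tail behavior of $t$ inside $\alg{B}'$---or, alternatively, by invoking the standard tame-congruence-theoretic results describing how traces, tails, and minimal sets are inherited by a subalgebra that meets a trace and contains a point of the tail. Establishing that this configuration descends to $\alg{B}'$ is the crux; once it is in hand, the minimality argument of the previous paragraph closes the proof.
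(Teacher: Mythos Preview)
Your outline is correct and matches the paper's approach: both arguments establish $\beta=\Cg^{\alg B}(0,1)$ from join irreducibility (your write-up of this is clean and complete), and both prove $\alg B'=\alg B$ by showing that the subalgebra $\alg B'=\Sg^{\alg B}(0,1,t)$ again has a type~\atyp\ prime quotient with tailed minimal sets, then invoking the minimality of $|B|$ (after checking, as you do, that no projection of $\alg B'$ collapses to a point, which would contradict minimality of~$n$).

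The paper does not write out the descent step either; it simply cites the proof of \cite[Lemma~3.1]{Freese:2009} and records the one adaptation required. That adaptation is the piece you have not isolated. In the Freese--Valeriote argument one produces, inside $\alg B'$, a prime quotient $\delta\prec\gamma$ (with $\gamma$ generated by $(0,1)$) whose minimal sets inherit a nonempty tail from~$U$. In the congruence-modular setting of \cite{Freese:2009} any tailed prime quotient suffices. Here you must in addition verify that $\typ(\delta,\gamma)=\atyp$. The paper's observation is that this follows because $\beta$ is abelian over $\alpha$: abelianness transfers to $\gamma$ over $\delta$, so $\typ(\delta,\gamma)\in\{\utyp,\atyp\}$, and since $\sV$ omits~\utyp\ the type is~\atyp.

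Your concern that the idempotent polynomial $e$ with $e(B)=U$ may use parameters outside $B'$ is legitimate, but the resolution is not ``idempotence keeps the witnesses inside $B'$'' (it does not, in general). The descent of the minimal-set/tail structure is handled by the standard tame-congruence-theoretic machinery invoked in the cited proof from \cite{Freese:2009}; you should simply appeal to that argument rather than attempt to rebuild it from idempotence. What you should add to your write-up is the abelianness/type-\atyp\ observation above, since that is the only genuinely new ingredient beyond the congruence-modular case.
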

This follows from the same argument used to prove~\cite[Lemma~3.1]{Freese:2009}, using the fact that since $\beta$ is abelian over $\alpha$ it follows that $\gamma$ over $\delta$ that appears in the proof will also have type 2.

\begin{lemma}[{\protect cf.~\cite[Lemma~3.2]{Freese:2009}}]
  \label{lem:fv_3-2}
Assume $\Gamma$. For every proper nonempty subset $\sigma$ of $\nn$,
  either $\beta \leq \rho_\sigma$ or $\alpha \join \rho_\sigma = 1_B$.
\end{lemma}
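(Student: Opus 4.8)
The plan is to prove the contrapositive: assuming $\beta \not\leq \rho_\sigma$, I will show that $\alpha \join \rho_\sigma = 1_B$. The first step is purely lattice-theoretic. As recorded in the context $\Gamma$, the congruence $\beta$ is join irreducible with $\alpha$ as its unique lower cover (by~\cite[Lemma 6.2]{HM:1988}), so every congruence strictly below $\beta$ lies under $\alpha$. Hence $\beta \not\leq \rho_\sigma$ forces $\beta \wedge \rho_\sigma < \beta$, and therefore $\beta \wedge \rho_\sigma \leq \alpha$; equivalently $\beta \wedge \rho_\sigma = \alpha \wedge \rho_\sigma$. This identifies the intervals $[\alpha,\beta]$ and $[\alpha \join \rho_\sigma,\, \beta \join \rho_\sigma]$ as transposes of one another, which is exactly what will let me move the prime quotient $\langle \alpha, \beta\rangle$ up across $\rho_\sigma$.

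Next I would pass to the quotient $\alg{B}/\rho_\sigma$. Since $\rho_\sigma = \bigwedge_{j \in \sigma}\rho_j$ and $\alg{B}$ is subdirect in $\prod_{\nn}\alg{A}_i$, the algebra $\alg{B}/\rho_\sigma$ is subdirect in $\prod_{j\in\sigma}\alg{A}_j$, and as $\sigma$ is a proper subset this involves strictly fewer than $n$ factors, each lying in $\sS$. By the minimality of $n$, the algebra $\alg{B}/\rho_\sigma$ has \emph{no} prime quotient of type \atyp\ whose minimal sets have nonempty tails. Identifying $\op{Con}(\alg{B}/\rho_\sigma)$ with the interval $[\rho_\sigma, 1_B]$ of $\op{Con}(\alg{B})$, it therefore suffices to show that if $\alpha \join \rho_\sigma < 1_B$ then this interval \emph{does} contain such a quotient; the resulting contradiction forces $\alpha \join \rho_\sigma = 1_B$.

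To produce that quotient I would transpose $\langle\alpha,\beta\rangle$ upward. Because $\langle\alpha,\beta\rangle$ is abelian (type \atyp), the transposed prime quotient across $\rho_\sigma$ again has type \atyp; this is the same phenomenon exploited in Lemma~\ref{lem:fv_3-1} (cf.\ the remark there that the quotient $\gamma$ over $\delta$ inherits type \atyp\ precisely because $\beta$ is abelian over $\alpha$). It then remains to carry the nonempty tail of the $\<\alpha,\beta\>$-minimal set $U$ across to this transposed quotient. Here I would invoke Lemma~\ref{lem:fv_3-1}: choosing $0,1 \in U$ with $(0,1) \in \beta \mysetminus \alpha$ and $t$ in the tail of $U$, we have $\beta = \op{Cg}^{\alg{B}}(0,1)$ and $\alg{B}$ is generated by $\{0,1,t\}$; since $\beta \not\leq \rho_\sigma$ we get $(0,1) \notin \rho_\sigma$, so the images $\bar 0, \bar 1, \bar t$ generate $\alg{B}/\rho_\sigma$ and still separate a type-\atyp\ pair, and one tracks $\bar t$ through a minimal set of the transposed quotient to witness the nonempty tail there.

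The main obstacle is this transposition-and-tail-transfer step, and in particular the configuration in which the transposed interval degenerates, namely $\beta \leq \alpha \join \rho_\sigma$ even though $\beta \not\leq \alpha$ and $\beta \not\leq \rho_\sigma$. This is a failure of join semidistributivity at $\beta$. In the congruence-modular setting of~\cite{Freese:2009} such configurations are excluded, because omitting both \utyp\ and \styp\ forces join semidistributivity modulo solvability; since we have dropped the hypothesis that \styp\ is omitted, that tool is no longer available. The crux of the present argument is therefore to replace it using the abelian structure of the type-\atyp\ quotient $\langle\alpha,\beta\rangle$ together with the omission of \utyp, applying commutator reasoning on $U$ and its tail to push $\alpha \join \rho_\sigma$ all the way up to $1_B$ in this degenerate case. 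I expect this to be the delicate part of the proof; the remaining steps follow the framework of~\cite{Freese:2009} and the lemmas already in place.
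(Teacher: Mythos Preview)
Your overall plan---assume $\beta \not\le \rho_\sigma$, use join irreducibility of $\beta$ to get $\beta\wedge\rho_\sigma \le \alpha$, and then contradict the minimality of $n$ by exhibiting a type-\atyp\ prime quotient with tails in $\alg B/\rho_\sigma$---is the right framework and is exactly what the paper intends by deferring to the proof of~\cite[Lemma~3.2]{Freese:2009}. The paper offers no further detail, so your sketch is already more explicit than what appears here.

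Where your proposal goes off track is in the diagnosis of the ``degenerate'' case $\beta \le \alpha\vee\rho_\sigma$. You assert that in~\cite{Freese:2009} this configuration is excluded via join semidistributivity modulo solvability, and that dropping the type-\styp\ hypothesis therefore removes the tool. But observe that $\langle\alpha,\beta\rangle$ has type \atyp, so $\alpha$ and $\beta$ are already solvably equivalent; passing to the quotient by the solvability relation collapses them to a single point. No semidistributivity statement about that quotient can possibly distinguish $\alpha$ from $\beta$ or rule out the pentagon you describe. Whatever the argument in~\cite{Freese:2009} does at this step, it is not an appeal to join semidistributivity modulo solvability, and in particular it does not depend on omitting type \styp. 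The relevant input is rather the full strength of~\cite[Lemma~6.2]{HM:1988} (beyond mere join irreducibility): for the \emph{minimal} such $\beta$, whenever $\beta\not\le\theta$ the interval $\langle\alpha,\beta\rangle$ perspects up to a prime quotient $\langle\alpha\vee\theta,\beta\vee\theta\rangle$ with the \emph{same} minimal sets. This is general tame congruence theory and is insensitive to whether type \styp\ is present, which is precisely why the present paper can invoke the 2009 proof verbatim.

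So the gap you flag is real in your write-up, but its resolution is not a new commutator argument on $U$ and its tail; it is to recognize that the \tct\ machinery already in play (via the minimality of $\beta$ and~\cite[Lemma~6.2]{HM:1988}) forces $\alpha\vee\rho_\sigma \prec \beta\vee\rho_\sigma$ directly, after which your Case~A argument finishes the job.
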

  This follows from the proof ~\cite[Lemma~3.2]{Freese:2009}.

\begin{lemma}\label{lem:nearperm}
Assume $\Gamma$.  If $\rho \in \Con(\alg{B})$ with $\beta \join \rho = 1_B$ then
  for all $v\in B$, and for all $b\in \body(U)$, we have
  $(v,b) \in (\beta \circ \rho) \cap (\rho \circ \beta)$.
\end{lemma}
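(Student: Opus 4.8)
The plan is to establish, for every $v\in B$ and every $b\in\body(U)$, the two memberships $(v,b)\in\beta\circ\rho$ and $(v,b)\in\rho\circ\beta$ separately. Since congruences are symmetric, $(\beta\circ\rho)^{-1}=\rho\circ\beta$, so the membership $(v,b)\in\rho\circ\beta$ is the same as $(b,v)\in\beta\circ\rho$; it therefore suffices to produce, for arbitrary $v$ and body element $b$, a witness $w$ with $v\mathrel\rho w\mathrel\beta b$ together with a symmetric witness $w'$ with $b\mathrel\rho w'\mathrel\beta v$. First I would dispose of the degenerate case: if $\beta\le\rho$ then $\rho=\beta\join\rho=1_B$ and both memberships are immediate. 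So assume $\beta\not\le\rho$. Because $\beta$ is join irreducible with unique lower cover $\alpha$, every congruence strictly below $\beta$ lies below $\alpha$, and hence $\beta\wedge\rho\le\alpha$.

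Next I would exploit idempotence: in an idempotent algebra every congruence class is a subuniverse, so $C:=b/\beta$ is a subalgebra and its $\rho$-saturation $C^{*}=\{x\in B: (x,c)\in\rho \text{ for some } c\in C\}$ is a $\rho$-saturated subalgebra containing $b$. The assertion that $(v,b)\in\rho\circ\beta$ for all $v$ is exactly the statement $C^{*}=B$. Now a subset of $B$ saturated under both $\rho$ and $\beta$ is saturated under $\beta\join\rho=1_B$, so the only nonempty such subset is $B$ itself; as $C^{*}$ is already $\rho$-saturated, it suffices to show that $C^{*}$ is also $\beta$-saturated. Tracing a crossing of the boundary of $C^{*}$ along a $\beta$-step, this reduces to a single local ``swap'': whenever $q\mathrel\beta p\mathrel\rho d$ with $d\in C$ (so $d\mathrel\beta b$), one must find $w$ with $q\mathrel\rho w\mathrel\beta d$. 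In other words, one must convert a $\beta\circ\rho$-connection into a $\rho\circ\beta$-connection, but only for connections anchored at the distinguished class $C=b/\beta$; full permutability of $\beta$ and $\rho$ is not available, which is why the body hypothesis must enter.

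The heart of the matter is this swap, and here is where $b\in\body(U)$ and the omission of \utyp are used. Since type \utyp is omitted, the prime quotient $\alpha\prec\beta$ has genuine type \atyp, so the body of $U$ carries a one-dimensional affine structure over a finite field and there is a polynomial $m(x,y,z)$ of $\alg{B}$ that, modulo $\alpha$, acts as the \malcev\ difference on each $\beta$-class; the body hypothesis guarantees that $C=b/\beta$ contains a full $\langle\alpha,\beta\rangle$-trace, so this operation is nondegenerate there. Setting $w:=m(q,p,d)$, the relation $p\mathrel\rho d$ yields $w\mathrel\rho m(q,p,p)$ and the relation $q\mathrel\beta p$ yields $w\mathrel\beta m(p,p,d)$, so the two desired endpoints should fall out once the \malcev\ identities are applied. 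The main obstacle is precisely that these identities hold only modulo $\alpha$ and only when their arguments share a common $\beta$-class, whereas $p$ and $d$ may lie in different classes and the conclusion must be exact rather than modulo $\alpha$. I expect to overcome this by localizing at the minimal set through its idempotent polynomial $e$, replacing $q,p,d$ by their images in $U$ where the affine structure is exact, and by using $\beta\wedge\rho\le\alpha$ together with the trace structure of $C$ to absorb the $\alpha$-slack into $\rho$. Controlling this interaction between $\alpha$, $\rho$, and the affine operation on the body is the step I anticipate will require the most care.
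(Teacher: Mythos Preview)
Your proposal has a genuine gap, and it is precisely the one you yourself flag at the end: the ``swap'' step is never carried out. You need, given $q\mathrel\beta p\mathrel\rho d$ with $d\in b/\beta$, to produce $w$ with $q\mathrel\rho w\mathrel\beta b$, and you hope to do this with a pseudo-\malcev\ polynomial $m$ by setting $w=m(q,p,d)$. But the identities $m(x,y,y)=x$ and $m(x,x,y)=y$ hold only for arguments in $U$ (and even then only when the middle argument is in the body), so neither $m(q,p,p)=q$ nor $m(p,p,d)=d$ is available. Pushing $q,p,d$ into $U$ via $e$ destroys the link back to the original elements, and the inequality $\beta\wedge\rho\le\alpha$ goes the wrong way to let you ``absorb $\alpha$-slack into $\rho$'': for that you would need $\alpha\le\rho$, not $\alpha\ge\beta\wedge\rho$. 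More fundamentally, the conclusion says that \emph{every} element of $B$ is $(\rho\circ\beta)$-related to $b$; this is a global statement about $B$, and you are trying to prove it by a purely local body-trace mechanism that never invokes anything special about $B$ beyond the type of $\langle\alpha,\beta\rangle$.

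The paper's argument is quite different and exploits two features of the context $\Gamma$ that you do not use at all: the \emph{nonempty tail} of $U$ and the \emph{3-generation} of $\alg B$ established in Lemma~\ref{lem:fv_3-1}. First, since $\beta\join\rho=1_B$ restricts to $1_U$ on $U=e(B)$ and no body element is $\beta$-related to a tail element, some $\rho$-link must cross from body to tail; composing with a suitable permutation $d(-,b',b)$ produces a tail element $t$ with $b\mathrel\rho t$. Second, choosing $b''\in\body(U)$ with $(b,b'')\in\beta\setminus\alpha$, Lemma~\ref{lem:fv_3-1} gives $\alg B=\Sg(b,b'',t)$. Now every $v\in B$ is $s(b,b'',t)$ for some idempotent term $s$, and the two chains
\[
v=s(b,b'',t)\ \mathrel\rho\ s(b,b'',b)\ \mathrel\beta\ s(b,b,b)=b,
\qquad
v=s(b,b'',t)\ \mathrel\beta\ s(b,b,t)\ \mathrel\rho\ s(b,b,b)=b
\]
finish the proof immediately. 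The minimality assumptions baked into $\Gamma$ (via Lemma~\ref{lem:fv_3-1}) are doing the real work here; a direct permutability-style argument that ignores them does not seem to have enough leverage.
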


\newcommand\rhosig{\ensuremath{\rho}}
\begin{proof}
\noindent Note that
$\beta \join \rho = 1_B$ implies
$\restr{\beta}{U} \join \restr{\rho}{U} = \restr{1_B}{U} = 1_U$,
since $U = e(B)$, for some idempotent unary polynomial~$e$.
Now, for all $x$, $y \in U$, if $x\in \body(U)$ and $y\in \tail(U)$, then
$(x,y) \notin \beta$.  Therefore,
$(x, y) \in  1_U = \restr{\beta}{U} \join \restr{\rho}{U}$ implies
there must be some $b' \in \body(U)$ and $t'\in \tail(U)$ such that
$b' \mathrel{\rho} t'$.

Now, let $d(x,y,z)$ be a pseudo-\malcev polynomial for $U$,
which exists by~\cite[Lemma~4.20]{HM:1988}.
Thus,
\begin{itemize}
\item $d(B,B,B) = U$
\item $d(x,x,x) = x$ for all $x\in U$
\item $d(x,x,y) = y = d(y,x,x)$ for all $x\in \body(U)$, $y \in U$.
\end{itemize}
Moreover, for all $c$, $d \in \body(U)$, the unary polynomials
$d(x,c,d)$, $d(c,x,d)$, and $d(c,d,x)$ are permutations on $U$.
If we now fix an arbitrary element $b\in \body(U)$ and
let $p(x) = d(x,b',b)$, then (see~\cite[Lemma~4.20]{HM:1988})
\begin{itemize}
\item  $p(U) = U$, since $d(x, b', b)$ is a permutation of $U$, 
\item $p(b') = d(b',b',b) = b \in \body(U)$, and
\item  $t:=p(t')\in \tail(U)$, since $t'\in \tail(U)$.
\end{itemize}
Since $(b',t') \in \rho$, we have $(b, t) = (p(b'), p(t')) \in \rho$.
Since $b$ is in the body, there is an element $b''$ in the body with
$(b,b'') \in \beta - \alpha$. By Lemma~\ref{lem:fv_3-1}, this implies
$\alg{B} = \Sg^{\alg{B}}(\{b, b'', t\})$.

Finally, if $v \in B$, then $v = s^{\alg{B}}(b,b'',t)$ for
some (idempotent) term $s$, so
\[
v = s^{\alg{B}}(b,b'',t)
\mathrel{\rho} s^{\alg{B}}(b,b'',b)
\mathrel{\beta} s^{\alg{B}}(b,b,b) = b,
\]
and
\[
v = s^{\alg{B}}(b,b'',t)
\mathrel{\beta}  s^{\alg{B}}(b,b,t)
\mathrel{\rho}s^{\alg{B}}(b,b, b)  = b.
\]
Therefore,
$(v,b) \in  (\beta \circ \rho) \cap (\rho \circ \beta)$.
Since $v \in B$ and $b\in \body(U)$ were aribitrary,
this completes the proof.
\end{proof}

\begin{lemma}[{\protect cf.~\cite[Lemma~3.3]{Freese:2009}}]\
  \label{lem:fv_3-3}
  Assume $\Gamma$.
  \begin{enumerate}[(i)]
    \item \label{item:6} There exists $i$
      such that $\alpha \join \rho_i = 1_B$
    \item \label{item:7} There exists $i$ such that
      $\alpha \join \rho_i < 1_B$.
  \end{enumerate}
\end{lemma}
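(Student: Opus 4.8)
The plan is to dispatch (i) immediately and to concentrate the effort on (ii). For (i), I would note that $\beta > 0_B = \rho_{\nn} = \bigwedge_{i<n}\rho_i$, so $\beta \not\le \rho_i$ for at least one $i$ (otherwise $\beta$ would lie below the meet). Since $n>1$, every singleton $\{i\}$ is a proper nonempty subset of $\nn$, so Lemma~\ref{lem:fv_3-2} applied to $\sigma=\{i\}$ yields $\beta\le\rho_i$ or $\alpha\join\rho_i=1_B$; as the first alternative fails for this $i$, we obtain $\alpha\join\rho_i=1_B$.

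For (ii) I would first reduce the statement to a concrete coordinate condition. By Lemma~\ref{lem:fv_3-2} the inequality $\alpha\join\rho_i<1_B$ is equivalent to $\beta\le\rho_i$: if $\beta\not\le\rho_i$ then $\alpha\join\rho_i=1_B$, while $\beta\le\rho_i$ forces $\alpha\join\rho_i=\rho_i<1_B$ (using $\rho_i<1_B$ from $\Gamma$). Fixing $0,1\in\body(U)$ with $(0,1)\in\beta\setminus\alpha$, Lemma~\ref{lem:fv_3-1} tells us $\beta$ is generated by $(0,1)$, so $\beta\le\rho_i$ iff $(0,1)\in\rho_i$. Hence (ii) is exactly the assertion that, viewing $\alg{B}$ as a subdirect product, the two body elements $0$ and $1$ agree in some coordinate $i$. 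I would argue this by contradiction, assuming $(0,1)\notin\rho_i$—equivalently $\beta\not\le\rho_i$, hence by Lemma~\ref{lem:fv_3-2} also $\alpha\join\rho_i=1_B$ and therefore $\beta\join\rho_i=1_B$—for every $i$.

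Under this assumption the relations in play are $\alpha\join\rho_i=\beta\join\rho_i=1_B$ together with $\beta\wedge\rho_i=\alpha\wedge\rho_i$, the last holding because $\beta$ is join irreducible with unique lower cover $\alpha$, so any congruence strictly below $\beta$ lies below $\alpha$. In a modular congruence lattice these would force $\beta=\beta\wedge(\alpha\join\rho_i)=\alpha\join(\beta\wedge\rho_i)=\alpha$, the contradiction we want. Since we have dropped the congruence modular (equivalently, type-$\styp$-omitting) hypothesis of \cite{Freese:2009}, modularity is unavailable, and this is exactly where Lemma~\ref{lem:nearperm} is meant to replace permutability by near-permutability. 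Because $\beta\join\rho_i=1_B$, that lemma applies to each $\rho_i$; indeed, its proof shows that for our fixed body element $0$ and each $i$ there is a tail element $s_i\in\tail(U)$ with $(0,s_i)\in\rho_i$. In coordinates this says that $s_i$ agrees with $0$ in the $i$-th coordinate.

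The remaining step, which I expect to be the main obstacle, is to patch these per-coordinate agreements into a single tail element agreeing with $0$ in \emph{every} coordinate: since $\bigwedge_i\rho_i=0_B$ means coordinates separate points, such an element would equal $0$, contradicting $\body(U)\cap\tail(U)=\emptyset$. The difficulty is that the tail elements $s_i$ delivered by Lemma~\ref{lem:nearperm} depend on $i$, and gluing them into one global element is precisely what full congruence permutability would grant for free. I expect to close the gap by exploiting that $\body(U)$ is a single type-$\atyp$ trace, so that its affine structure and the pseudo-\malcev polynomial of Lemma~\ref{lem:nearperm} can be used to realign the $s_i$ onto a common tail element, with the minimality of $|B|$ in $\Gamma$ held in reserve to eliminate stray tail elements. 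Carrying out this alignment in the absence of permutability is the crux of the argument.
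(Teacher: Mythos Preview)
Your argument for (i) is correct and essentially matches the paper's. For (ii) your setup is also right: assume $\alpha\join\rho_i=1_B$ (hence $\beta\join\rho_i=1_B$) for every $i$ and seek a contradiction, with Lemma~\ref{lem:nearperm} and the pseudo-\malcev\ polynomial as the tools. The gap is in how you propose to use them.

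You extract from the proof of Lemma~\ref{lem:nearperm} tail elements $s_i$ with $(0,s_i)\in\rho_i$ and then try to glue these into a single tail element $s$ with $(0,s)\in\bigwedge_i\rho_i=0_B$. You correctly flag this gluing as ``the main obstacle,'' and it is a real one: the pseudo-\malcev\ identities only simplify $d(x,y,z)$ when the repeated argument lies in the body, so there is no evident way to combine several \emph{tail} elements $s_i$ into another tail element with prescribed $\rho_i$-relations. Your proposal does not indicate how to close this, and the suggested fallback (minimality of $|B|$) does not obviously help.

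The paper sidesteps the gluing entirely by reversing which element varies. Fix one body element $b$ and one tail element $t$ once and for all. By \cite[Lemma~4.25]{HM:1988}, $(b,d(b,t,t))\notin\beta$, so in particular $d(b,t,t)\neq b$. Now use the \emph{statement} of Lemma~\ref{lem:nearperm}: for each $i$ we have $(b,t)\in\beta\circ\rho_i$, so after pushing the intermediary into $U$ via $e$ there is a \emph{body} element $a_i$ with $b\mathrel\beta a_i\mathrel{\rho_i}t$. Since $a_i\in\body(U)$, the pseudo-\malcev\ identity gives $d(b,a_i,a_i)=b$, whence $d(b,t,t)\mathrel{\rho_i}d(b,a_i,a_i)=b$ for every $i$, forcing $d(b,t,t)=b$ and the desired contradiction. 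The point is that the target element $d(b,t,t)$ is fixed from the outset; only the body intermediaries $a_i$ vary with $i$, and that is exactly where the pseudo-\malcev\ machinery works.
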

\begin{proof}
If item \eqref{item:6} failed, then by Lemma~\ref{lem:fv_3-2} we would
have $\beta \leq \rho_i$ for all $i$, and that
would imply $\beta = 0_B$.

To see \eqref{item:7}, assume that
$\alpha \join \rho_i = 1_B$ for all $i$.  Then $\beta \join \rho_i = 1_B$ for all $i$ as well.
Let $b \in \body(U)$ and $t \in \tail(U)$, and let $d(x,y,z)$
denote the pseudo-\malcev operation introduced in the proof of Lemma~\ref{lem:nearperm}. By
\cite[Lemma~4.25]{HM:1988},  $(b, d(b,t,t)) \notin \beta$.
We will arrive at a contradiction by showing that
$b = d(b,t,t)$. To accomplish this we will show that $d(b,t,t) \rho_i b$ for all $i$ and use that $\bigwedge \rho_i = 0_B$.

If $i \in \nn$ then by Lemma~\ref{lem:nearperm},
$(b,t) \in \beta \circ \rho_i$ so
there is an element $a \in B$ satisfying
$b\mathrel\beta a \mathrel\rho_i t$. By applying the idempotent
polynomial $e$ with $e(U) = U$, we have
$b\mathrel\beta e(a) \mathrel\rho_i t$, so we may
assume $a \in U$. But this puts $a \in \body(U)$,
since $a\mathrel \beta b$. Therefore,
\[
d(b,t,t) \mathrel\rho_i d(b,a,a) = b.
\]
Since this hold for every~$i$, $d(b,t,t) = b$.
\end{proof}


\begin{theorem}[{\protect cf.~\cite[Theorem 3.4]{Freese:2009}}]\label{thm:fv_3-4}
Let $\sV$ be the variety generated by some finite set $\sS$ of finite,
idempotent algebras that is closed under taking subalgebras. If\/ $\sV$
omits type~\utyp\ and some finite member of $\sV$ has a prime quotient
of type~\atyp\
whose minimal sets have nonempty tails, then there is some
3-generated algebra $\alg B$ with this property that belongs to $\sS$ or is a subdirect
product of two algebras from $\sS$.
\end{theorem}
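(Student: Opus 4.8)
The plan is to establish Theorem~\ref{thm:fv_3-4} by reducing the general case to the minimal configuration set up in $\Gamma$ and then proving that the minimal number $n$ of factors equals $2$. First I would invoke the standard \ac{tct} machinery (cited in the discussion preceding the lemmas) to observe that if some finite member of $\sV$ has a type~\atyp\ prime quotient with nonempty tails, then because $\sS$ is closed under subalgebras and $\sV = \bbV(\sS)$, such an algebra can be realized as a subdirect product of finitely many members of $\sS$. This legitimizes choosing $n$ minimal and $|B|$ minimal as in the setup of $\Gamma$, so that $\alg{B} \sdp \prod_{\nn}\alg{A}_i$ with $0_B < \rho_\sigma < 1_B$ for every proper nonempty $\sigma \subset \nn$. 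The entire burden of the theorem then rests on showing that this minimal $n$ cannot exceed $2$: once $n \in \{1,2\}$ is forced, Lemma~\ref{lem:fv_3-1} supplies the three generators $\{0,1,t\}$ (with $0,1 \in U$, $(0,1)\in\beta\mysetminus\alpha$, and $t$ in the tail), giving the desired $3$-generated witness belonging to $\sS$ (if $n=1$) or to a subdirect product of two members of $\sS$ (if $n=2$).

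The heart of the argument is the assumption $n > 1$ leading to $n = 2$. I would lean directly on Lemma~\ref{lem:fv_3-3}: part~\eqref{item:6} produces an index $i$ with $\alpha \join \rho_i = 1_B$, and part~\eqref{item:7} produces an index $j$ with $\alpha \join \rho_j < 1_B$; in particular $i \neq j$, so $n \geq 2$ is consistent and we must rule out $n > 2$. The strategy is to show that the factors split into (at most) two ``blocks'' according to the dichotomy of Lemma~\ref{lem:fv_3-2}: for each proper nonempty $\sigma$, either $\beta \leq \rho_\sigma$ or $\alpha \join \rho_\sigma = 1_B$. Fixing the index $j$ with $\alpha \join \rho_j < 1_B$, Lemma~\ref{lem:fv_3-2} forces $\beta \leq \rho_j$. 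I would then aim to collapse all the remaining coordinates: using that $\beta\join\rho_i = 1_B$ (which holds for the ``good'' indices by Lemma~\ref{lem:fv_3-3}\eqref{item:6} combined with $\alpha \le \beta$) and the near-permutability conclusion of Lemma~\ref{lem:nearperm}, namely $(v,b) \in (\beta \circ \rho_i)\cap(\rho_i\circ\beta)$ for all $v\in B$ and $b\in\body(U)$, one controls how $\beta$ interacts with each $\rho_i$ well enough to contradict the minimality of $n$ whenever $n>2$.

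Concretely, the reduction should run as follows. Partition $\nn$ into $\sigma_0 = \{i : \beta \le \rho_i\}$ and its complement $\sigma_1 = \{i : \alpha\join\rho_i = 1_B\}$; Lemma~\ref{lem:fv_3-3} guarantees both are nonempty, and Lemma~\ref{lem:fv_3-2} (applied to singletons) guarantees this is a genuine partition. If either block had at least two elements, I would produce a proper nonempty $\sigma$ on which the type~\atyp\ nonempty-tail behaviour already survives after quotienting, contradicting the minimality of $n$ (or the minimality of $|B|$). For the $\sigma_0$ side, since $\beta\le\rho_i$ for each such $i$, the prime quotient $\alpha\prec\beta$ descends to the projection killing the \emph{other} $\sigma_0$ coordinates, which lets us drop a factor; for the $\sigma_1$ side, the near-permutability from Lemma~\ref{lem:nearperm} shows that $\beta$ together with any single $\rho_i$ from $\sigma_1$ already recovers $1_B$ in a way that makes additional $\sigma_1$-factors redundant. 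Pushing either block down to a single coordinate leaves exactly two factors, so $n = 2$.

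The main obstacle I anticipate is the bookkeeping in this final collapsing step: showing rigorously that a factor can be discarded \emph{without destroying the type~\atyp\ prime quotient with nonempty tails}. The delicate point is that the minimal set $U$ and the tail element $t$ must be preserved (or faithfully reproduced) in the smaller subdirect product, and it is precisely here that the hypothesis omitting type~\utyp\ — which underlies the existence of the pseudo-\malcev\ operation $d$ used in Lemmas~\ref{lem:nearperm} and~\ref{lem:fv_3-3} — does the real work, since type~\atyp\ minimal sets and their tails behave well only because of the near-unanimity/permutability afforded by $d$. I expect that verifying the quotient still has a nonempty tail will require re-examining the $\<\alpha,\beta\>$-minimal set under the projection and applying Lemma~\ref{lem:fv_3-1} to confirm the $3$-generation, rather than any new structural input.
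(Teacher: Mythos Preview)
Your setup is correct, and you have identified exactly the right lemmas. But the endgame you sketch---partitioning the coordinates into $\sigma_0=\{i:\beta\le\rho_i\}$ and $\sigma_1=\{i:\alpha\join\rho_i=1_B\}$ and then arguing that a factor can be dropped from whichever block has size at least~$2$---is both much harder than necessary and not actually justified. The ``collapsing step'' you flag as the main obstacle is a genuine gap: to contradict the minimality of $n$ you would need to exhibit a quotient $\alg B/\rho_\sigma$ that still has a type~\atyp\ prime quotient with nonempty tails, and nothing in Lemmas~\ref{lem:fv_3-1}--\ref{lem:nearperm} hands you that directly. In particular, your $\sigma_1$-side argument (``near-permutability makes additional $\sigma_1$-factors redundant'') is a slogan, not a proof; it is unclear how $\beta\circ\rho_i = \rho_i\circ\beta$ on body elements lets you transport the tail to a proper quotient.

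The paper's argument bypasses all of this with a single application of Lemma~\ref{lem:fv_3-2} to a \emph{two-element} subset rather than to singletons. Having chosen (via Lemma~\ref{lem:fv_3-3}) indices $i$ with $\alpha\join\rho_i<1_B$ and $j$ with $\alpha\join\rho_j=1_B$, assume $n>2$, so that $\sigma=\{i,j\}$ is a proper nonempty subset of~$\nn$. Lemma~\ref{lem:fv_3-2} gives the dichotomy for $\rho:=\rho_i\meet\rho_j$: either $\beta\le\rho$ or $\alpha\join\rho=1_B$. The first alternative forces $\alpha\join\rho_j\le\beta\join\rho_j\le\rho\join\rho_j=\rho_j<1_B$, contradicting the choice of~$j$; the second forces $\alpha\join\rho_i\ge\alpha\join\rho=1_B$, contradicting the choice of~$i$. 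Hence $n\le 2$, and Lemma~\ref{lem:fv_3-1} finishes. No quotient ever needs to be analyzed for tails; the contradiction is purely lattice-theoretic. You had all the pieces---you only needed to apply Lemma~\ref{lem:fv_3-2} to $\{i,j\}$ instead of to singletons.
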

\begin{proof}
Choose $n > 0$, $\alg A_i \in \sS$, for $0 \le i \le n-1$ and $\alg B$
as described just before Lemma~\ref{lem:fv_3-1}. From Lemma \ref{lem:fv_3-1} we know that $\alg B$ is
3-generated. If $n > 1$ then by the previous lemma we can choose $i$
and $j < n$ with $\alpha \join \rho_i < 1_B$ and $\alpha \join \rho_j =
1_B$. If $n > 2$ then Lemma~\ref{lem:fv_3-2} applies to $\rho = \rho_i
\meet \rho_j$ and so we know that either $\beta \le \rho$ or $\alpha
\join \rho = 1_B$. This yields a contradiction as the former is not
possible, since then
\[
\alpha \join \rho_j \le \beta \join \rho_j \le \rho \join \rho_j = \rho_j < 1_B.
\]
The latter can't hold
since  $\alpha \join \rho  = 1_B$ implies that $\alpha \join \rho_i = 1_B$.
So, the minimality of $n$ forces $n\le 2$ and the result follows.
\end{proof}

\begin{example}\label{alg:nodiff}
Let $\alg A$ be the (idempotent) algebra with universe $\{0,1,2, 3\}$ with basic operation $x\cdot y$ defined by:
  \[
  \begin{array}{c|cccc}
  \cdot&0&1&2&3\\\hline
  0&0&2&1&3\\
  1&2&1&0&3\\
  2&1&0&2&3\\
  3&3&0&0&3
  \end{array}
  \]
It can be checked that $\alg A$ is a simple algebra of type~\btyp and that no subalgebra of $\alg A$ has a prime quotient of type~\atyp whose minimal sets have nonempty tails.  It can also be checked that the subalgebra of $\alg A^2$ generated by $\{(0,0), (1,0), (0,3)\}$ does have such a prime quotient.
Note that these calculations can be carried out with the aid of the Universal Algebra Calculator~\cite{UACalc}.
This demonstrates that in general one must look for nonempty tails of minimal sets of type~\atyp in the square of a finite idempotent algebra and not just in the subalgebras of the algebra itself.

We note that, since $\alg A$ is simple of type~\btyp, the ternary projection operation $p(x,y,z ) = z$ is a difference term operation for $\alg A$.  We also note that the term operation $(x\cdot y) \cdot(y \cdot x)$ of $\alg A$ is idempotent and commutative, so the variety generated by $\alg A$ omits type~\utyp (see Lemma 9.4 and Theorem 9.6 of~\cite{HM:1988}).  So, this example also demonstrates that for finite idempotent algebras, having a difference term operation and generating a variety that omits type~\utyp does not guarantee the existence of a difference term for the variety.
\end{example}

\begin{corollary}\label{cor:diffterm}
  Let $\alg A$ be a finite idempotent algebra.  Then $\bbV(\alg A)$ has a difference term if and only if
  \begin{itemize}
  \item $\sansH \sansS (\alg{A})$ does not contain an algebra that is term equivalent to the 2-element set and
      \item no 3-generated subalgebra of $\alg A^2$ has a prime quotient of type~\atyp whose minimal sets have nonempty tails.
  \end{itemize}
\end{corollary}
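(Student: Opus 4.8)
The plan is to deduce the corollary from Kearnes' characterization (Theorem~\ref{thm:KearnesThm}), matching its two conditions---omission of type~\utyp and the empty-tails condition on type~\atyp\ prime intervals---to the two bulleted conditions. The first bullet will correspond to omitting \utyp\ via Proposition~\ref{prop:2.1}, and the second will correspond to the empty-tails condition via Theorem~\ref{thm:fv_3-4}. Since Theorem~\ref{thm:KearnesThm} gives that $\bbV(\alg A)$ has a difference term iff it omits \utyp\ \emph{and} every finite member has empty tails on its type~\atyp\ prime intervals, it suffices to establish these two equivalences separately.

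First I would show that $\bbV(\alg A)$ omits type~\utyp\ if and only if $\sansH\sansS(\alg A)$ contains no algebra term equivalent to a 2-element set. For the forward implication I argue contrapositively: if $\utyp \in \typ(\bbV(\alg A))$, then since \utyp\ is the least element of the lattice of types, Proposition~\ref{prop:2.1} (whose only applicable case is $\mathbf{j}=\mathbf{1}$) forces a strictly simple algebra term equivalent to a 2-element set into $\sansH\sansS(\alg A)$. The reverse implication is immediate, since any algebra term equivalent to a 2-element set lies in $\bbV(\alg A)$ and realizes type~\utyp\ in its own congruence lattice. This identifies the first bullet with the first condition of Theorem~\ref{thm:KearnesThm}.

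With that first condition in hand, I would show that the second condition of Theorem~\ref{thm:KearnesThm}---that every finite $\alg B \in \sV$ has empty tails on its type~\atyp\ prime intervals---is equivalent to the second bullet. One direction is routine: every 3-generated subalgebra of $\alg A^2$ is a finite member of $\sV$, so the empty-tails condition immediately yields the second bullet. The substantive direction is the converse, which I would prove by contraposition using Theorem~\ref{thm:fv_3-4} with $\sS := \sansS(\alg A)$. This $\sS$ is finite, idempotent, closed under subalgebras, and satisfies $\bbV(\sS) = \bbV(\alg A)$, which omits \utyp; so if $\sV$ had a type~\atyp\ tail in some finite member, the theorem would produce a 3-generated witness lying in $\sS$ or arising as a subdirect product of two members of $\sS$.

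The step I expect to require the most care is converting that witness into a 3-generated subalgebra of $\alg A^2$, as demanded by the second bullet. If the witness is a subdirect product of $\alg A_i, \alg A_j \le \alg A$, it already embeds in $\alg A_i \times \alg A_j \le \alg A^2$, so nothing more is needed. If instead it is a single subalgebra $\alg B \le \alg A$ drawn from $\sS$, I would replace it by its diagonal copy $\{(b,b) : b \in B\}$, which is a subalgebra of $\alg A^2$ isomorphic to $\alg B$ and hence inherits both its 3-generation and its offending type~\atyp\ prime quotient with nonempty tails. In either case the second bullet fails, completing the contrapositive. Assembling the two equivalences via Theorem~\ref{thm:KearnesThm} then yields the corollary.
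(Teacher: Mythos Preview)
Your proposal is correct and follows essentially the same approach as the paper, which simply cites Proposition~\ref{prop:2.1} and Theorems~\ref{thm:KearnesThm} and~\ref{thm:fv_3-4} as the ingredients. You have spelled out the combination in more detail than the paper does---in particular, the diagonal embedding to handle the case where Theorem~\ref{thm:fv_3-4} produces $\alg B \in \sS$ rather than a genuine subdirect product---but the route is the same.
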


\begin{proof}
  This is just a combination of Proposition~\ref{prop:2.1} and Theorems~\ref{thm:KearnesThm} and~\ref{thm:fv_3-4}.
\end{proof}

The next theorem essentially gives an algorithm to decide if a finitely
generated, idempotent variety has a difference term. In the next
section, we will show that the algorithm runs in polynomial-time.

In \cite{KearnesKiss1999}, Kearnes and Kiss show there is a close connection
between $\alpha \cov \beta$ being the critical interval of
a pentagon and $\la \alpha,\beta\ra$-minimal sets
having nonempty tails.
By \cite[Theorem 2.1]{KearnesKiss1999}, the minimal sets
of a prime critical interval of a pentagon have nonempty tails, provided
the type is not~\utyp. In the other direction, if the
$\la \alpha,\beta\ra$-minimal sets have nonempty tails,
then there is a pentagon in the congruence lattice of a subalgebra of
$\alg A^2$ with a prime critical interval of the same type.
This connection between minimal sets with tails and
pentagons is important for us since we do not have a polynomial time algorithm
for finding an $\la \alpha,\beta\ra$-minimal set.

If $\alg B$ is a subalgebra of $\alg A^2$ and $\theta$ is a congruence
of $\alg A$, then we define $\thetao \in \con (\alg B)$ by
$(x_0,x_1) \mathrel{\thetao} (y_0,y_1)$ iff
$x_0 \mathrel{\theta} y_0$. We define $\theta_1$ similarly.
In case $\theta = 0_{\alg A}$, the least congruence,
we use the notation $\rho_0$ and $\rho_1$ instead of
$0_0$ and $0_1$. Of course $\rho_0$ and $\rho_1$ are the kernels
of the first and second projections of $\alg B$ into~$\alg A$.

\begin{theorem}\label{thm:algorithm}
Let $\alg A$ be a finite idempotent algebra and let $\sV$ be the variety
it generates. Then $\sV$ has a difference term if and only if the
following conditions hold:
\begin{enumerate}

   \item \label{it:1}$\sV$ omits TCT-type~\utyp.
  \item \label{it:2}
    There do not exist $a$, $b$, $c\in A$
    satisfying the following, where
    $\alg B := \Sg^{\alg A}(a, b, c)$ and
    $\alg C:=\Sg^{\alg{B}^2}\bigl(\{(a,b), (a,c), (b, c)\}
                        \cup 0_{\alg B}\bigr)$:
    \begin{enumerate}
      \item \label{it:2a}
        $\beta := \Cga a b B$ is join irreducible with lower cover $\alpha$,
      \item \label{it:2b}
        $((a,b),(b,b)) \notin (\alpha_0 \meet \alpha_1) \join \Cga {(a,c)} {(b,c)} C$,  and
      \item \label{it:2c}
        $[\beta,\beta] \le \alpha$.
    \end{enumerate}

  \item \label{it:3}
    There do not exist $x_0$, $x_1$, $y_0$, $y_1\in A$ satisfying
    the following, where $\alg B$ is the subalgebra of
    $\alg A \times \alg A$ generated by $0 := (x_0, x_1)$, $1 := (y_0,x_1)$,
    and $t := (x_0,y_1)$:
    \begin{enumerate}
      \item \label{it:3a}
        $\beta := \Cga01{B}$ is join irreducible with lower cover $\alpha$,
      \item \label{it:3b}
        $\rho_0 \join \alpha = 1_{\alg B}$, and
      \item \label{it:3c}
        the type of $\beta$ over  $\alpha$ is~\atyp.
    \end{enumerate}
  \end{enumerate}
\end{theorem}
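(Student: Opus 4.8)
The plan is to derive this characterization from Corollary~\ref{cor:diffterm} by showing that conditions~\ref{it:2} and~\ref{it:3} are a polynomial-time-checkable reformulation of its ``bad minimal set'' clause. First I would dispose of condition~\ref{it:1}: by Proposition~\ref{prop:2.1}, $\sansH\sansS(\alg A)$ contains an algebra term equivalent to the $2$-element set if and only if $\sV$ fails to omit \utyp, so the first bullet of Corollary~\ref{cor:diffterm} is exactly~\ref{it:1}. Assuming henceforth that $\sV$ omits \utyp, it remains to prove that
\[
\text{some finite member of $\sV$ has a prime quotient of type } \atyp \text{ with nonempty tails}
\]
holds if and only if the existential configuration described in~\ref{it:2} or the one described in~\ref{it:3} can be realized. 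Note that by Theorem~\ref{thm:fv_3-4} (applied with $\sS=\sansS(\alg A)$) the displayed statement is equivalent to the second bullet of Corollary~\ref{cor:diffterm}, since any such quotient forces one in a $3$-generated subalgebra of $\alg A^2$; conditions~\ref{it:2} and~\ref{it:3} then assert precisely that neither configuration can be realized.

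The engine throughout is the Kearnes--Kiss correspondence recalled before the theorem: for a prime quotient of type other than \utyp, the minimal sets have nonempty tails if and only if the quotient is the critical interval of a pentagon in the congruence lattice of a subalgebra of the square. This trades the (hard to compute) tail condition for a congruence-join computation. For the direction producing tails from a witness, condition~\ref{it:3} is the transparent case: with $0=(x_0,x_1)$, $1=(y_0,x_1)$, $t=(x_0,y_1)$ we have $\beta=\Cga{0}{1}{B}\le\rho_1$ and $(0,t)\in\rho_0$, while~\ref{it:3b} gives $\alpha\join\rho_0=1_{\alg B}$. Since $\beta\not\le\rho_0$ (else $\beta\le\rho_0\meet\rho_1=0_{\alg B}$) and $\alpha\prec\beta$, one gets $\beta\meet\rho_0=\alpha\meet\rho_0$, so $\{\alpha\meet\rho_0,\alpha,\beta,\rho_0,1_{\alg B}\}$ is a genuine pentagon with critical interval $\alpha\prec\beta$, of type~\atyp by~\ref{it:3c}; \cite[Theorem~2.1]{KearnesKiss1999} then yields nonempty tails. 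For~\ref{it:2} the pentagon lives in $\alg C\le\alg B^2$: conditions~\ref{it:2a} and~\ref{it:2c} (with omission of \utyp) make $\beta$ abelian over $\alpha$ of type~\atyp, so the coordinate copy $\beta_0/\alpha_0$ in $\alg C$ also has type~\atyp, while the non-membership~\ref{it:2b} is exactly the strict inequality certifying that $\alpha_0\meet\alpha_1$, the critical interval, and $\Cga{(a,c)}{(b,c)}{C}$ form a pentagon; again \cite[Theorem~2.1]{KearnesKiss1999} produces tails in $\alg C\le\alg A^2$.

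For the converse direction I would run the reduction set up just before Lemma~\ref{lem:fv_3-1}: assuming $\sV$ has type-\atyp\ tails, choose the minimal subdirect representation $\alg B\sdp\prod_{\nn}\alg A_i$, so that $n\le 2$ by Theorem~\ref{thm:fv_3-4}. If $n=2$, Lemma~\ref{lem:fv_3-3} yields (after relabelling) coordinates with $\alpha\join\rho_0=1_{\alg B}$ and $\alpha\join\rho_1<1_{\alg B}$, whence Lemma~\ref{lem:fv_3-2} forces $\beta\le\rho_1$; taking the body element $b$ and tail element $t$ with $(b,t)\in\rho_0$ produced in the proof of Lemma~\ref{lem:nearperm}, and a second body element $b''$ with $(b,b'')\in\beta\mysetminus\alpha$ (hence $(b,b'')\in\rho_1$), I can coordinatize these as $0,t,1$ exactly as in~\ref{it:3}, with $\alg B=\Sg^{\alg B}(0,1,t)$ by Lemma~\ref{lem:fv_3-1}; then~\ref{it:3a}--\ref{it:3c} hold by construction. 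If $n=1$, then $\alg B\le\alg A$ is $3$-generated by body/tail elements $a,b,c$ with $\beta=\Cga{a}{b}{B}$ (Lemma~\ref{lem:fv_3-1}) and here I invoke \cite[Theorem~2.1]{KearnesKiss1999} in the forward direction to obtain a pentagon with critical interval of type~\atyp inside a subalgebra of $\alg B^2$, then verify it is captured in the explicit $\alg C$ so that the pentagon's strictness becomes precisely~\ref{it:2b}.

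The main obstacle is the $n=1$ half of the converse, namely showing that the Kearnes--Kiss pentagon can always be realized inside $\alg C=\Sg^{\alg B^2}\bigl(\{(a,b),(a,c),(b,c)\}\cup 0_{\alg B}\bigr)$ and is detected exactly by the non-membership in~\ref{it:2b}. This requires checking that adjoining the diagonal $0_{\alg B}$ to the three listed generators yields a subalgebra large enough to support the pentagon yet whose join $(\alpha_0\meet\alpha_1)\join\Cga{(a,c)}{(b,c)}{C}$ still does not absorb $((a,b),(b,b))$, and that the critical interval inherits type~\atyp from $\beta/\alpha$ via~\ref{it:2c}. I expect the verification that~\ref{it:2b} is both necessary and sufficient for the tail to persist---tracing the pseudo-\malcev\ polynomial of the minimal set through the square exactly as in the proof of Lemma~\ref{lem:nearperm}---to be the most technical step.
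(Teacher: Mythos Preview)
Your plan is essentially the paper's own proof: both directions run through the Kearnes--Kiss pentagon/tail correspondence, and the converse uses the minimal subdirect setup of Lemmas~\ref{lem:fv_3-1}--\ref{lem:fv_3-3} together with Theorem~\ref{thm:fv_3-4} to split into the $n=1$ and $n=2$ cases, exactly as you outline. Two points are worth sharpening.

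First, in the ``witness for~\ref{it:2} $\Rightarrow$ tails'' direction, the pentagon you sketch is not quite the one that works. The paper sets $\delta:=(\alpha_0\meet\alpha_1)\join\Cga{(a,c)}{(b,c)}{C}$ and $\theta:=\delta\join\Cga{(a,b)}{(b,b)}{C}$, and the pentagon is $\{\alpha_0\meet\alpha_1,\delta,\theta,\alpha_0,\beta_0\}$: the side opposite $\delta<\theta$ is $\alpha_0$, not $\Cga{(a,c)}{(b,c)}{C}$. Condition~\ref{it:2b} is precisely $\delta<\theta$; one checks $\alpha_0\join\delta=\beta_0$ (since $\delta\not\le\alpha_0$) and $\alpha_0\meet\theta=\alpha_0\meet\alpha_1$ (since $\theta\le\alpha_1$), and then refines to $\delta'\prec\theta$ of type~\atyp\ using $[\theta,\theta]\le\alpha_0\meet\alpha_1$.

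Second, for the $n=1$ converse the paper does \emph{not} chase an abstractly-produced pentagon into the specific algebra $\alg C$. Instead, once $a,b,c$ are the body/tail generators of $\alg B$ with $\beta=\Cga{a}{b}{B}$, conditions~\ref{it:2a} and~\ref{it:2c} are immediate, and condition~\ref{it:2b} is obtained by a direct citation of \cite[Theorem~2.4]{KearnesKiss1999} (not Theorem~2.1, which runs the other way). That theorem is formulated so as to yield exactly the non-membership $((a,b),(b,b))\notin(\alpha_0\meet\alpha_1)\join\Cga{(a,c)}{(b,c)}{C}$ from the nonempty-tail hypothesis, so the ``most technical step'' you anticipate is already packaged there; your plan to trace the pseudo-\malcev\ polynomial through the square would amount to reproving that result.
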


\begin{proof}
First assume $\sV$ has a difference term.
Then~(\ref{it:1}) holds by Theorem~\ref{thm:KearnesThm}.
If~(\ref{it:2}) fails then there are $a$, $b$ and $c\in A$ such
that the conditions specified in~(\ref{it:2}) hold.
In particular,~(\ref{it:2c}) holds and implies that
$\typ\<\alpha, \beta\> \subseteq \{\utyp, \atyp\}$,
so by~(\ref{it:1}) the type of $\<\alpha, \beta\>$ is~\atyp.
Let
\begin{align*}
\delta &:= (\alpha_0 \meet \alpha_1) \join \Cga {(a,c)} {(b,c)} C,
            \text{ and }\\
\theta &:= \delta \join \Cga {(a,b)}{(b,b)} C.
\end{align*}
By its definition, $\delta \nleq \alpha_0$,
so by~(\ref{it:2b}), $\alpha_0\meet\alpha_1 < \delta < \theta \le \beta_0$.
Since $C$ contains $0_B$, the diagonal of $B$, the coordinate
projections are onto, so $\alpha_0 \cov \beta_0$ and this interval
has type~\atyp.
From this it follows that $\alpha_0 \join \delta = \beta_0$.
Since $\theta \le \alpha_1$, we have $\alpha_0 \meet \theta = \alpha_0\meet\alpha_1$.
Hence
\[
\{\alpha_0\meet\alpha_1, \delta, \theta, \alpha_0, \beta_0 \}
\]
forms a pentagon.
Since $[\beta_0,\beta_0] \leq \alpha_0$, we have
$[\theta,\theta] \leq \alpha_0\meet\alpha_1 < \delta$,
so there is a congruence $\delta'$ such that
$\delta\leq \delta' \cov \theta$ and $\<\delta', \theta\>$
has type~\atyp.
As mentioned in the discussion above, this implies the
$\la \delta',\theta\ra$-minimal sets have nonempty tails, contradicting
Theorem~\ref{thm:KearnesThm}.

Now suppose that~(\ref{it:3}) fails. Then the conditions imply
\[
\{0_{\alg B}, \alpha, \beta, \rho_0, 1_{\alg B} \}
\]
is a pentagon whose critical prime interval has type~\atyp. This
leads to a contradiction in the same manner as above.

For the converse assume that $\sV$ does not have a difference
term.
We want to show that at least one of~(\ref{it:1}), (\ref{it:2}) or~(\ref{it:3}) fails.
By Theorem~\ref{thm:KearnesThm} either $\sV$ admits \tct-type~\utyp, in which case~(\ref{it:1}) fails, or there is a finite
algebra $\alg B \in \sV$ and a join irreducible
$\beta \in \op{Con}(\alg B)$ with lower cover
$\alpha$ such that the type of $\<\alpha, \beta\>$ is~\atyp\
and the $\la\alpha,\beta\ra$-minimal sets have
nonempty tails. Let $U$ be one of these minimal sets.

We may assume $\alg B$ is minimal in the same manner as with the
above lemmas (with $\sS$ being the set of subalgebras of $\alg A$).
By Lemma~\ref{lem:fv_3-1}
we have that $\alg B$ is generated by any $0$, $1$, and $t$ in
$U$ such
that $\beta = \Cga 01{B}$ and $t$ is in the tail. By
Theorem~\ref{thm:fv_3-4}, $\alg B$ is either
in $\sS$ or is a subdirect product of two members of $\sS$.

Assume $\alg B$ is a subalgebra of $\alg A$. Taking
$a=0$, $b=1$ and $c=t$, we claim the conditions specified
in~(\ref{it:2}) hold for these three elements.
Since the type of $\beta$ over $\alpha$ is \atyp,~(\ref{it:2c})
holds and we already have that~(\ref{it:2a}) holds.
(\ref{it:2b}) holds by~\cite[Theorem~2.4]{KearnesKiss1999}.
So this choice of $a$, $b$, and $c$ witness that~(\ref{it:2}) fails.

Now assume $\alg B$ is not in $\sS$ but is a subdirect
product of two members of $\sS$.
Then
by Lemma~\ref{lem:fv_3-3} we may
assume $\rho_0 \join \alpha = 1_{\alg B}$ and
$\rho_1 \join \alpha < 1_{\alg B}$. By Lemma~\ref{lem:fv_3-2}
we have $\rho_1 \ge \beta$.

This implies that $0$ and $1$ have the same second coordinate; that is,
$0 = (x_0,x_1)$ and $1 = (y_0,x_1)$ for some $x_0$, $y_0$ and $x_1\in A$.
By Lemma~\ref{lem:nearperm}, $(0,t) \in \rho_0 \circ \beta$
so $0 \mathrel {\rho_0} t' \mathrel{\beta} t$ for some $t' \in B$. Let $U = e(B)$
where $e$ is an idempotent polynomial. Then
$0 \mathrel {\rho_0} e(t') \mathrel{\beta} t$. This gives
that $e(t')$ is in the tail of $U$ and
$0 \mathrel{\rho_0} e(t')$. We can
replace $t$ by $e(t')$, and so assume that
$0 \mathrel{\rho_0} t$.
Since $0 = (x_0,x_1)$ then $t = (x_0,y_1)$ for some $y_1\in A$.
Now
$x_0$, $y_0$, $x_1$ and $y_1$ witness that~(\ref{it:3}) fails.
\end{proof}

\section{The Algorithm and its Time Complexity}
\label{sec:algorithm-its-time}
If $\alg A$ is an algebra with underlying set (or universe) $A$,
we let $|\alg A| = |A|$ be the cardinality of
$A$ and $||\alg A||$ be the \emph{input size}; that is,
\[
||\alg A|| = \sum_{i=0}^r k_i n^i
\]
where, $k_i$ is the number of basic operations of arity~$i$ and $r$
is the largest arity of the basic operations of $\alg A$. We set
$n = |\alg A|$ and $m = ||\alg A||$.


\begin{proposition}\label{speedprop}
Let $\alg A$ be a finite algebra with the parameters above. Then
there is a constant $c$ independent of these parameters
such that:
\begin{enumerate}
\item \label{speed1} If $S$ is a subset of $A$,
then $\Sg^{\alg A}(S)$ can be computed
in time
\[
c\, r\,||\Sg^{\alg A}(S)|| \le c\, r\,||\alg A|| = crm
\]
\item \label{speed2} If $a$, $b \in A$, then $\Cga a b A$ can be
computed in
$c\, r\, ||\alg A|| = crm$ time.
\item \label{speed3}
If $\alpha$ and $\beta$ are congruences of $\alg A$,
then $[\alpha,\beta]$ can be computed in time $crm^4$.
If $\alg A$ has a so-called \emph{Taylor term}
(see e.g.~\cite{MR3076179} or~\cite{HM:1988} for the definition),
and if $[\alpha,\beta] = [\beta,\alpha]$,
then $[\alpha,\beta]$ can be computed in time $c(rm^2 + n^5)$.
In particular, $[\beta,\beta]$ can be computed in this time.
\end{enumerate}
\end{proposition}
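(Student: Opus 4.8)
The plan is to prove all three bounds by the same strategy: exhibit a closure (or union--find) procedure and charge its total cost against the relevant input size, using repeatedly the elementary \emph{squaring estimate} $\|\alg{C}^2\|\le\|\alg{C}\|^2$. This holds because an arity-$i$ operation table over a set of size $s$ has $s^i$ entries, so $\|\alg{C}^2\|=\sum_i k_i s^{2i}$ while $\|\alg{C}\|^2=\sum_{i,j}k_ik_j s^{i+j}$ dominates it term by term; in particular, since any subalgebra of $\alg{A}^2$ has input size at most $\|\alg{A}^2\|\le m^2$, every structure I build will have a cheaply bounded size. For~(\ref{speed1}) I would run generation-by-closure: maintain $G:=S$ together with a worklist of newly added elements, and repeatedly apply each basic operation to each argument tuple containing at least one not-yet-processed element, adding new values to $G$ and the worklist. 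Each operation is evaluated on each argument tuple at most once over the whole run, so the number of table look-ups is at most $\sum_i k_i |\Sg^{\alg{A}}(S)|^i=\|\Sg^{\alg{A}}(S)\|$; charging $O(r)$ to read an $r$-tuple yields $c\,r\,\|\Sg^{\alg{A}}(S)\|$, and the stated inequality is immediate since $\Sg^{\alg{A}}(S)$ is a subalgebra of $\alg{A}$.

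For~(\ref{speed2}) I would compute $\Cga a b A$ by congruence closure on a union--find structure: union $a$ with $b$ and process a worklist of merges, where processing a merge forces, for every basic operation $f$ and every argument position, the two outputs obtained by substituting the merged classes into that position (with the remaining coordinates held fixed) to be merged as well. Each union strictly decreases the class count, so there are at most $n-1$ of them, and with the usual bookkeeping each operation-table entry triggers only $O(1)$ merge requests; the look-up total is again $O(\sum_i k_i n^i)=O(m)$, and with the $O(r)$ per-tuple factor this gives $c\,r\,m$.

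Part~(\ref{speed3}) is where the real work lies. For the general bound I would use the standard $\Delta$-construction from commutator theory: form the subalgebra $\alg{A}(\beta)\le\alg{A}^2$ on the set $\beta$, generate the congruence $\Delta$ of $\alg{A}(\beta)$ from the pairs $\langle(a,a),(b,b)\rangle$ with $(a,b)\in\alpha$, and read $[\alpha,\beta]$ off of $\Delta$. Generating $\Delta$ amounts to generating a compatible reflexive relation inside $\alg{A}(\beta)^2\le\alg{A}^4$ and then transitively closing it; by part~(\ref{speed1}) this costs $O\bigl(r\,\|\alg{A}(\beta)^2\|\bigr)$, and two uses of the squaring estimate give $\|\alg{A}(\beta)^2\|\le\|\alg{A}(\beta)\|^2\le (m^2)^2=m^4$. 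The transitive closure over the $\le n^2$ elements of $\beta$ is absorbed, so the whole computation runs in $c\,r\,m^4$ time.

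For the improved bound I would avoid passing to the fourth power. Assuming a Taylor term and $[\alpha,\beta]=[\beta,\alpha]$, I would generate $\Delta$ \emph{directly} as a congruence of $\alg{A}(\beta)$ using the union--find procedure of part~(\ref{speed2}) rather than as a subalgebra of its square; since $\|\alg{A}(\beta)\|\le m^2$, this step costs only $O(r\,m^2)$. The remaining $O(n^5)$ accounts for recovering the commutator on $\alg{A}$: with a difference-style term one can describe a generating set for $[\alpha,\beta]$ by ranging a bounded-arity term over the $O(n^3)$ tuples from $A$ and then closing the resulting relation on the $n$-element base set, which fits in $O(n^5)$; symmetry of the commutator is what permits carrying out the $\alg{A}(\beta)$ computation on one side only. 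Specializing $\alpha=\beta$ then yields the same bound for $[\beta,\beta]$. I expect the main obstacle to be precisely this last regime: one must verify that the Taylor-term and symmetry hypotheses make the cheaper, direct computation \emph{correct}---that the congruence generated on $\alg{A}(\beta)$ without building the square, together with the $O(n^3)$ term evaluations, genuinely recovers $[\alpha,\beta]$---and then account carefully for the two phases to obtain the combined bound $c(r m^2+n^5)$.
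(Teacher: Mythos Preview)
Your treatments of parts~(\ref{speed1}) and~(\ref{speed2}) are fine; the paper simply cites~\cite{Freese:2009} for these, and your direct closure/union--find arguments are exactly what lies behind that citation.

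The gap is in part~(\ref{speed3}). For the general $crm^4$ bound you propose to compute the congruence $\Delta$ on $\alg{A}(\beta)$ and then ``read $[\alpha,\beta]$ off of $\Delta$.'' That readout is valid in congruence modular varieties but \emph{not} for the term-condition commutator in arbitrary finite algebras. The paper's algorithm is different in a way that matters: it computes the subalgebra $M(\alpha,\beta)\le\alg{A}^4$ (your ``compatible reflexive relation'' \emph{before} transitive closure) and then runs an increasing chain $\delta_0=0_A$, $\delta_{i+1}=\Cg^{\alg A}\{(u,v):\bigl[\begin{smallmatrix}x&y\\u&v\end{smallmatrix}\bigr]\in M(\alpha,\beta),\ (x,y)\in\delta_i\}$, which stabilizes at $[\alpha,\beta]$ in at most $n$ rounds. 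Generating $M$ costs $O(rm^4)$ and each round costs $O(n^4+rm)$, so the iteration is absorbed. The point is that the paper explicitly notes that replacing $M$ by its transitive closure $\Delta$ in this iteration can overshoot: the resulting congruence is only guaranteed to contain $[\alpha,\beta]$, not to equal it. So your general-case algorithm, as described, need not compute the commutator.

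For the improved bound your justification also breaks down. A Taylor term is not a ``difference-style term,'' so you cannot produce a finite generating set for $[\alpha,\beta]$ by evaluating a single bounded-arity term on $O(n^3)$ tuples. The paper's argument is more delicate and is where the hypotheses are actually used: by a result of Kearnes and Szendrei, a Taylor term together with $[\alpha,\beta]=[\beta,\alpha]$ forces the TC commutator to coincide with the \emph{linear} commutator, which is computed in an expansion $\alg{A}^*$ possessing a Mal\nobreak 'cev term. In $\alg{A}^*$ the relation $M(\alpha^*,\beta^*)$ is already transitive, and this is precisely what guarantees that substituting $\Delta_{\alpha,\beta}$ for $M(\alpha,\beta)$ in the $\delta_i$ iteration still yields $[\alpha,\beta]$. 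Once that correctness step is in place, the timing follows: $\Delta_{\alpha,\beta}$ is a principal-type congruence of $\alg{A}(\alpha)$, computable in $O(rm^2)$ by part~(\ref{speed2}), and the $\le n$ rounds over the $\le n^4$ matrices of $\Delta$ give the $O(n^5)$ term. Your instinct that ``the main obstacle'' is verifying correctness of the cheap computation was right; the missing ingredient is the linear-commutator detour, not a difference term.
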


\begin{proof}
For the first two parts see~\cite[Proposition~6.1]{Freese:2009}.
To see the third part we first describe a method to compute
$[\alpha,\beta]$.

Following the notation of~\cite{FreeseMcKenzie1987}, we write
elements of $\alg A^4$ as $2 \times 2$ matrices, and
let $M(\alpha,\beta)$ be the subalgebra of $\alg A^4$ generated by
the elements of the form
\[
\begin{bmatrix}
a & a\\
a' & a'\\
\end{bmatrix}
\quad \text{and} \quad
\begin{bmatrix}
b & b'\\
b & b'\\
\end{bmatrix}
\]
where $a \mathrel\alpha a'$ and $b \mathrel\beta b'$. Then
by definition
$[\alpha,\beta]$ is the least congruence $\gamma$ such that
\begin{equation}\label{commprop}\text{if }
\begin{bmatrix}
x & y\\
u & v\\
\end{bmatrix}
\text{ is in $M(\alpha,\beta)$ and $x \mathrel\gamma y$,
then $u \mathrel\gamma v$.}
\end{equation}

Let $\delta = [\alpha,\beta]$. Clearly, if
$\begin{bmatrix}
x & x\\
u & v\\
\end{bmatrix}$ is in  $M(\alpha,\beta)$, then $u \mathrel\delta v$.
Let $\delta_1$ be the congruence generated by the $(u,v)$'s so
obtained. Then $\delta_1 \le \delta$.

We can now define $\delta_2$ as the congruence generated by the pairs
$(u,v)$ arising as the second row of members of $M(\alpha,\beta)$,
where the elements of the first row are $\delta_1$ related.
More precisely, we inductively define $\delta_0 = 0_{\alg A}$ and
\begin{equation}\label{delta}
\delta_{i+1} = \operatorname{Cg}^{\alg A}\bigg(\bigg\{(u,v) :
\begin{bmatrix}
x & y \\
u & v\\
\end{bmatrix}
\in M(\alpha,\beta) \text{ and } (x,y) \in \delta_i\bigg\}\bigg)
\end{equation}
Clearly, $\delta_1 \le \delta_2 \le \cdots \le
\delta$ and so $\Join_i \delta_i \le \delta$.

Now $\Join_i \delta_i$ has the property~\eqref{commprop} of the
definition of $[\alpha,\beta]$,
and hence $\delta \le \Join_i \delta_i$, and thus they are equal.

So to find $[\alpha,\beta]$ when $\alg A$ is finite, we find
$M(\alpha,\beta)$ and then compute the $\delta_i$'s, stopping when
$\delta_i = \delta_{i+1}$, which will be $[\alpha,\beta]$ of course.
The time to compute $M(\alpha,\beta)$ is bounded by $crm^4$ by
part~\eqref{speed1}. Suppose we have computed $\delta_i$. To
compute $\delta_{i+1}$ we run through the at most $n^4$ matrices
in
$\begin{bmatrix}
x & y \\
u & v\\
\end{bmatrix} \in M(\alpha,\beta)$. If $x$ and $y$ are in the same
block, we join the block containing $u$ and the one containing~$v$
into a single block. By the techniques of \cite{Freese2008},
this can be done in constant time. Now we take the congruence generated
by this partition. So, by \eqref{speed2} the time to compute
$\delta_{i+1}$ from $\delta_i$ is $c(n^4 + rm)$. Since the congruence
lattice of $\alg A$ has length at most $n-1$, there are at most~$n$
passes. So the total time is at most a constant times
$rm^4 + n(n^4 + rm) = rm^4 + n^5 + nrm$.
Since the commutator is trivial in unary algebras, we may
assume $m \ge n^2$, and thus the time is bounded by a
constant times $rm^4$, proving the first part of~\eqref{speed3}.
This procedure for calculating the commutator is part
of Ross Willard's thesis~\cite{MR2637477}.

To see the other part,
let $\alg A(\alpha)$ be the subalgebra
of $\alg A \times \alg A$ whose components are $\alpha$ related. If we
view the elements of $\alg A(\alpha)$ as column vectors, then the
elements of $M(\alpha,\beta)$ can be thought of as pairs of elements
of $\alg A(\alpha)$, that is, as a binary relation on $\alg A(\alpha)$.
Define $\Delta_{\alpha,\beta}$ to be the congruence on $\alg A(\alpha)$
generated by this relation, that is,
the transitive closure of this relation.
Clearly $M(\alpha,\beta) \subseteq
\Delta_{\alpha,\beta}$. So, if in the algorithm above we used
$\Delta_{\alpha,\beta}$ in place of $M(\alpha,\beta)$, the result
would be at least $\delta = [\alpha,\beta]$. So, if we knew that
\begin{equation}\label{deltaStatement}
\text{whenever
$\begin{bmatrix}
x & y \\
u & v\\
\end{bmatrix} \in \Delta_{\alpha,\beta}$ and $(x,y) \in \delta$,
then $(u,v) \in \delta$},
\end{equation}
then this modified procedure would also
compute~$\delta$.

While \eqref{deltaStatement} is not true in general even if
$\alg A$ has a Taylor term, it is true when
$\alg A$ has a Taylor term and $[\alpha,\beta] = [\beta,\alpha]$.
So assume $\alg A$ has a Taylor term and $[\alpha,\beta] = [\beta,\alpha]$.
Under these conditions the commutator agrees with the
linear commutator, that is, $[\alpha,\beta] = [\alpha,\beta]_\ell$ by
Corollary~4.5 of~\cite{MR1663558}.
Suppose
$
\begin{bmatrix}
a&b\\c&d
\end{bmatrix}\in \Delta_{\alpha,\beta}$ and $(a,b)\in \delta$.
Then, since
$\Delta_{\alpha,\beta}$ is the transitive closure of
$M(\alpha,\beta)$, there are
elements $a_i$ and $c_i$ in $A$,
$i = 0, \ldots, k$, with $a_0 =a$,  $c_0 = c$, $a_k = b$ and
$c_k = d$, such that
$
\begin{bmatrix}
a_i&a_{i+1}\\c_i&c_{i+1}
\end{bmatrix}\in M(\alpha,\beta)$.

Now the linear commutator is $[\alpha^*,\beta^*]\big\vert_{A}$,
where $\alpha^*$ and $\beta^*$ are congruences on an expansion
$\alg A^*$ of $\alg A$ such that $\alpha \subseteq \alpha^*$
and $\beta \subseteq \beta^*$; see Lemma~2.4 of~\cite{MR1663558}
and the surrounding discussion.

Moreover $M(\alpha,\beta) \subseteq M(\alpha^*,\beta^*)$, the latter
calculated in $\alg A^*$, because the generating matrices of
$M(\alpha^*,\beta^*)$ contain those of $M(\alpha,\beta)$, and
the operations of $\alg A$ are contained in the operations
of $\alg A^*$.  So
$
\begin{bmatrix}
a_i&a_{i+1}\\c_i&c_{i+1}
\end{bmatrix}\in M(\alpha^*,\beta^*)$.
By its definition $\alg A^*$ has a \malcev\ term, and
consequently $M(\alpha^*,\beta^*)$ is transitive as
a relation on $\alg A(\alpha^*)$. Thus
$
\begin{bmatrix}
a&b\\c&d
\end{bmatrix}\in M(\alpha^*,\beta^*)$, and hence,
$(c,d) \in [\alpha^*,\beta^*]\big\vert_A = [\alpha,\beta]$.

Now, since $\Delta_{\alpha,\beta}$ is a congruence on
$\alg A(\alpha)$, it can be computed in time
$crm^2$ by part~\eqref{speed2}. The result follows.
\end{proof}




\begin{theorem}\label{thm:time}
Let $\alg A$ be a finite idempotent algebra with parameters as
above.
Then one can determine if $\bbV(\alg A)$ has a difference
term in time $c(rn^4m^4 + n^{14})$.
\end{theorem}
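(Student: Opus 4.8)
The plan is to turn the characterization in Theorem~\ref{thm:algorithm} into an explicit search and to bound the cost of each piece with Proposition~\ref{speedprop}. First I would test condition~(\ref{it:1}), that $\sV$ omits type~\utyp: by Corollary~\ref{cor:2.2} and Proposition~\ref{prop:2.1} this reduces to inspecting the at most $n^2$ two-generated subalgebras of $\alg A$ and is easily carried out within the stated budget. If it fails there is no difference term and we stop. The decisive consequence of its \emph{success} is that $\alg A$, and hence every subalgebra of $\alg A$ and of $\alg A^2$, has a Taylor term, which is exactly the hypothesis that licenses the fast commutator of Proposition~\ref{speedprop}\eqref{speed3}. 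I also note that if every basic operation is unary then idempotence forces $\alg A$ to be term-equivalent to a set, so~(\ref{it:1}) fails at once; thus in the remaining analysis we may assume $m \ge n^2$.

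Next I would check condition~(\ref{it:2}) by running over all $n^3$ triples $(a,b,c) \in A^3$. For each triple, forming $\alg B = \Sg^{\alg A}(a,b,c)$ and $\beta = \Cga abB$ costs $crm$ (Proposition~\ref{speedprop}\eqref{speed1}--\eqref{speed2}); testing join irreducibility and extracting the lower cover $\alpha = \bigvee\{\Cga cdB : (c,d)\in\beta,\ \Cga cdB < \beta\}$ costs $crmn^2$ (at most $n^2$ principal congruences, each by Proposition~\ref{speedprop}\eqref{speed2}); forming $\alg C \le \alg B^2$ and the join in~(\ref{it:2b}) costs $crm^2$, using $|\alg B^2| \le n^2$ and $\|\alg B^2\| \le m^2$; and computing $[\beta,\beta]$ in $\alg B$ for~(\ref{it:2c}) costs $c(rm^2 + n^5)$ by Proposition~\ref{speedprop}\eqref{speed3}, since $\alg B$ has a Taylor term and $[\beta,\beta]$ is trivially symmetric. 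Hence~(\ref{it:2}) costs $c\,n^3(rm^2 + n^5) = c(rn^3m^2 + n^8)$, which is absorbed into the claimed bound.

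The dominant cost is condition~(\ref{it:3}), where I would loop over all $n^4$ tuples $(x_0,x_1,y_0,y_1) \in A^4$. Here $\alg B = \Sg^{\alg A^2}(0,1,t)$ is a subalgebra of $\alg A^2$, so $|\alg B| \le n^2$ and $\|\alg B\| \le \|\alg A^2\| \le m^2$. Forming $\alg B$, computing $\beta = \Cga 01B$ together with its lower cover $\alpha$, and testing~(\ref{it:3b}) cost at most $crn^4m^2 \le crm^4$ per tuple, using $n^2 \le m$. The type test~(\ref{it:3c}) is the heart of the matter: since~(\ref{it:1}) holds, the omission of \utyp\ means that a prime quotient has type~\atyp\ precisely when it is abelian, i.e.\ when $[\beta,\beta] \le \alpha$, so I would replace~(\ref{it:3c}) by this commutator inequality. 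Because $\alg B$ has a Taylor term and $[\beta,\beta]$ is symmetric, Proposition~\ref{speedprop}\eqref{speed3} computes $[\beta,\beta]$ in time $c(r\|\alg B\|^2 + |\alg B|^5) \le c(rm^4 + n^{10})$. Multiplying this per-tuple cost by the $n^4$ tuples gives $c(rn^4m^4 + n^{14})$, which also dominates the contributions of~(\ref{it:1}) and~(\ref{it:2}); this is the asserted running time.

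The step I expect to be the main obstacle is the type-\atyp\ test~(\ref{it:3c}). Computing \tct\ types directly through minimal sets is expensive, and we have no polynomial bound for locating an $\<\alpha,\beta\>$-minimal set, so the whole argument rests on two things: the reduction of ``type~\atyp'' to the symmetric commutator condition $[\beta,\beta] \le \alpha$, valid only once~(\ref{it:1}) has been confirmed so that abelian prime quotients are forced to have type~\atyp\ rather than~\utyp; and the fast commutator of Proposition~\ref{speedprop}\eqref{speed3}, which itself depends on the Taylor term supplied by~(\ref{it:1}). The remaining delicate point is mere bookkeeping: one must check that $\|\alg A^2\| \le m^2$, and likewise $\|\alg B^2\| \le m^2$, rather than something exponential in the arity~$r$, so that the powers of $m$ in the final bound come out as stated.
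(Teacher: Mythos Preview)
Your proposal is correct and follows essentially the same line as the paper: both use Theorem~\ref{thm:algorithm} as the algorithm, both observe that the dominant cost is the $n^4$-fold loop in condition~(\ref{it:3}), both replace the type-\atyp\ test~(\ref{it:3c}) by the commutator inequality $[\beta,\beta]\le\alpha$ once type~\utyp\ has been excluded, and both invoke the fast (Taylor-term) commutator of Proposition~\ref{speedprop}\eqref{speed3} applied in $\alg B\le\alg A^2$ to obtain the $c(rm^4+n^{10})$ per-tuple cost and hence the final $c(rn^4m^4+n^{14})$. Your write-up is in fact a bit more explicit than the paper's on the bookkeeping points (why $m\ge n^2$, why $\|\alg A^2\|\le m^2$, and how the lower cover $\alpha$ is computed), but these are elaborations of the same argument rather than a different route.
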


\begin{proof}
Theorem~\ref{thm:algorithm} gives a three-step
algorithm to test
if $\bbV (\alg A)$ has a difference term.
The first step is to test if $\bbV (\alg A)$ omits type~\utyp. This
can be done in time $crn^3m$
by~\cite[Theorem~6.3]{Freese:2009}.

Looking now at part (3) of Theorem~\ref{thm:algorithm},
there are several things that have to be constructed.
By Proposition~\ref{speedprop}, all
of these things can be constructed in time $crm^2$ and
parts~(a) and~(b) can be executed in this time or less.
For part~(c) we need to test if the type of $\beta$
over $\alpha$ is \atyp. Since at this point in the
algorithm we know that $\alg A$ omits type ~\utyp,
we can test if the type is \atyp\ by testing if
$[\beta,\beta] \le \alpha$. By Proposition~\ref{speedprop}
this can be done in time $c(rm^4 + n^{10})$.
Since we need to do
this for all $x_0$, $x_1$, $y_0$ and $y_1$, the total
time for this step is at most $crn^4m^4 + n^{14}$.
A similar analysis applies to part~(2) and shows that it
can be done in time $crn^3m^2$. Since $crn^4m^4$ dominates
the other terms, the bound of the theorem holds.
\end{proof}

\section{Difference Term Operations}
Above we addressed the problem of deciding the existence of a difference term
for a given (idempotent, finitely generated) variety.  In this section we are
concerned with the practical problem of finding a difference term
\emph{operation} for a given (finite, idempotent) algebra.
We describe algorithms for
\begin{enumerate}
\item \label{item:a} deciding whether a given finite idempotent algebra
has a difference term operation, and
\item \label{item:b} finding a difference term operation
for a given finite idempotent algebra.
\end{enumerate}
Note that Theorem~\ref{thm:time} gives a polynomial-time algorithm
for deciding whether or not the variety $\bbV(\alg A)$ generated by a
finite idempotent algebra $\alg A$ has a difference term.
If we run that algorithm on input $\alg A$, and if the observed
output is ``Yes'', then of course we have a positive answer to decision
problem~(\ref{item:a}).  However, a negative answer returned by the
algorithm only tells us that $\bbV(\alg A)$ has no difference term.
It does not tell us whether or not $\alg A$ has a difference term operation.  Example~\ref{alg:nodiff} provides a finite idempotent algebra that has a difference term operation such that the variety that it generates does not have a difference term.



\medskip

In this section we present solutions to problems~(\ref{item:a}) and~(\ref{item:b})
using different methods than those of the previous sections.
In Subsection~\ref{sec:algor-1} we give a polynomial-time algorithm
for deciding whether a given idempotent algebra $\alg A$ has a difference term operation.
In Subsection~\ref{sec:comp-diff-term} we address problem~(\ref{item:b})
by presenting an algorithm for constructing  the operation table of a difference term operation.

\subsection{Local Difference Terms}
\label{sec:local-diff-terms}
In~\cite{MR3239624},
Ross Willard and the third author define 
a ``local Hagemann-Mitschke sequence'' which they use as the basis of
an efficient algorithm for deciding for a given $n$ whether an idempotent
variety is $n$-permutable.
In~\cite{MR3109457}, Jonah Horowitz introduced similar
local methods for deciding when a given variety satisfies
certain \malcev conditions.
Inspired by these works, we now define a ``local difference term
operation'' and use it to develop a polynomial-time algorithm for deciding
the existence of a difference term operation.

Start with a finite idempotent algebra, $\alg A =\< A, \dots\>$.
For elements $a, b, a_j, b_j \in A$, the following are some shorthands
we will use to denote the congruences generated by these elements:
\[
\thetaab:= \Cg^{\alg{A}}(a, b) \qquad
\thetai:= \Cg^{\alg{A}}(a_i, b_i).
\]
Let $i \in \{0,1\}$.
By a \defin{local difference term operation for $(a,b,i)$}
we mean a ternary term operation $t$ satisfying the following conditions:
\begin{align}
\text{ if $i=0$, then } & a \comr{\thetaab} t(a,b,b); \label{eq:diff-triple1}\\
\text{ if $i=1$, then } & t(a,a,b) = b. \label{eq:diff-triple2}
\end{align}
If $t$ satisfies conditions~(\ref{eq:diff-triple1}) and~(\ref{eq:diff-triple2})
for all triples in some subset $S\subseteq A^2 \times \{0,1\}$, then we call $t$
a \defin{local difference term operation for $S$}.
Throughout the remainder of the paper, we will
write ``\ldto'' as a shorthand for
``local difference term operation.''


\newcommand\dtrel{\ensuremath{\mathrel{\mathcal{D}}}}
\newcommand\dtr{\ensuremath{\mathcal{D}}}
A few more notational conventions will come in handy below.
Let $\Clo_3(\alg{A})$ denote the set of all ternary term operations of $\alg{A}$,
and let
$\dtr \subseteq \bigl(A^2\times \{0,1\}\bigr) \times \Clo_3(\alg{A})$
denote the relation that associates $(a, b, i)$
with the operations in $\Clo_3(\alg{A})$ that are \ldtos for
$(a, b, i)$.  That is,
$((a,b,i), t) \in \dtr$ iff conditions~(\ref{eq:diff-triple1})
and~(\ref{eq:diff-triple2}) are satisfied.
The relation $\dtr$ induces an obvious Galois connection
from subsets of $A^2\times \{0,1\}$ to subsets
of $\Clo_3(\alg{A})$.  Overloading the symbol $\dtr$,
we take
\begin{align*}
\dtr &\colon \sP(A^2 \times \{0,1\}) \to \sP(\Clo_3(\alg{A})) \text{ and }\\
\breve{\dtr} &\colon \sP(\Clo_3(\alg{A})) \to \sP(A^2\times \{0,1\})
\end{align*}
to be the maps defined as follows:
\begin{align*}
  \text{for }& \;  S \subseteq A^2 \times \{0,1\}\; \text{ and } \;T \subseteq \Clo_3(\alg{A}),
  \text{ let }\\
  \dtr S &= \{t \in \Clo_3(\alg{A}) \colon (s,t) \in \dtr \text{ for all } s \in S\}, \text{ and }\\
  \breve{\dtr} T &= \{s \in A^2 \times \{0,1\} \colon (s,t) \in \dtr \text{ for all } t \in T\}.
\end{align*}
In other words, $\dtr S$ is the set of \ldtos
for $S$, and $\breve{\dtr} T$ is the set of triples for which every $t\in T$
is a \ldto.  When the set $S$ is just a singleton, we write
$\dtr (a,b,\chi)$ instead of $\dtr \{(a,b,\chi)\}$.

Now, suppose that every pair
$(s_0, s_1)\in (\AsqBool)^2$ 
has a \ldto.
Then every subset $S\subseteq \AsqBool$
has a \ldto, as we now prove.

\begin{theorem} 
  \label{thm:local-diff-terms}
  Let
  $\alg A$ be a finite idempotent algebra. 
  If every pair
  $(s_0, s_1) \in (A^2 \times \{0,1\})^2$
  has a local difference term operation, then
  every subset $S \subseteq \AsqBool$
  has a local difference term operation.
\end{theorem}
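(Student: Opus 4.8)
The plan is to translate the statement about term operations into a question about a single finite subalgebra and then to exploit idempotence. Regard $\Clo_3(\alg A)$ as a subalgebra of $\alg A^{A^3}$, with the operations of $\alg A$ acting coordinatewise; then for each point $\mathbf v \in A^3$ the evaluation map $t \mapsto t(\mathbf v)$ is a homomorphism $\Clo_3(\alg A) \to \alg A$. For a triple $s = (a,b,i)$ put $\mathbf v_s = (a,a,b)$ when $i=1$ and $\mathbf v_s = (a,b,b)$ when $i=0$, and let $G_s = \{b\}$ when $i=1$ and $G_s$ equal the $\comr{\thetaab}$-class of $a$ when $i=0$. Because $\alg A$ is idempotent, a one-element set is a subuniverse and every congruence class is a subuniverse (an operation applied to elements of one class returns, by idempotence, an element of that class), so in both cases $G_s$ is a subuniverse of $\alg A$. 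Now $\dtr(s)$ is exactly the preimage of $G_s$ under evaluation at $\mathbf v_s$, hence a subuniverse of $\Clo_3(\alg A)$, and the conclusion ``$S$ has an \ldto'' becomes $\bigcap_{s\in S}\dtr(s) \ne \emptyset$.

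Next I would pass to the product picture. Writing $S = \{s_1,\dots,s_k\}$, consider the homomorphism $\Phi \colon \Clo_3(\alg A) \to \alg A^k$ sending $t$ to $(t(\mathbf v_{s_1}),\dots,t(\mathbf v_{s_k}))$. Its image is the $3$-generated subalgebra $\mathbf D := \Sg^{\alg A^k}(\mathbf g_1,\mathbf g_2,\mathbf g_3)$, where $\mathbf g_r$ is the image of the $r$-th projection (so $(\mathbf g_r)_j = (\mathbf v_{s_j})_r$), and $\bigcap_{s}\dtr(s) \ne \emptyset$ iff $\mathbf D \cap \prod_{s} G_s \ne \emptyset$. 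A useful preliminary observation is that the third projection lies in $\dtr(a,b,1)$ for every $(a,b)$ and the first projection lies in $\dtr(a,b,0)$ for every $(a,b)$; thus the requirements of a single type are always simultaneously satisfiable, and all the content lies in the interaction between the $i=0$ coordinates $P$ and the $i=1$ coordinates $Q$. Concretely, I must produce $\mathbf w \in \mathbf D$ good on all of $P$ and all of $Q$ at once, knowing that $\mathbf g_1$ is already good on $P$ (since $(\mathbf g_1)_p=a_p\in G_p$) and $\mathbf g_3$ on $Q$ (since $(\mathbf g_3)_q=b_q\in G_q$).

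The heart of the proof — and the step I expect to be the main obstacle — is the pairwise-to-global jump. The hypothesis says precisely that $\operatorname{proj}_{\{j,l\}}(\mathbf D)$ meets $G_{s_j}\times G_{s_l}$ for every pair $j,l$, and I want $\mathbf D$ itself to meet $\prod_s G_s$. I would attack this by a minimal-counterexample argument: choosing $S$ of least size with $\bigcap_s \dtr(s) = \emptyset$, each $S \setminus \{s_j\}$ supplies a witness $t_j$ good on every coordinate except possibly $s_j$, so the task reduces to merging the $t_j$ into one global witness. The danger is that no generic Helly or near-unanimity principle is available: already for the idempotent reduct of a module, pairwise-intersecting subuniverses can fail to share a point (three affine lines forming a triangle), so a coordinatewise combination of the $t_j$ need not remain good. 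The route past this must use the specific shape of the $G_s$ rather than convexity: the $i=1$ sets are rigid singletons handled by the projection $z$, while each $i=0$ set is a full class of $\comr{\thetaab}$, over which $\thetaab$ is abelian. I would therefore reduce modulo the commutators, where the difference-term conditions become linear, Mal'cev-type constraints of exactly the kind for which the module computation above shows that pairwise solvability forces global solvability, and then lift. Making this reduction uniform across the different per-coordinate commutators $\comr{\thetaab}$, so that one term realizes all the lifted constraints simultaneously, is the delicate point on which the whole argument turns.
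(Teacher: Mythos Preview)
Your setup is correct and the reformulation in terms of the subalgebra $\mathbf D \le \alg A^{k}$ is clean, but the proposal stops short of a proof. The step you yourself flag as ``the delicate point on which the whole argument turns'' --- reducing modulo the various commutators $[\theta_{a_jb_j},\theta_{a_jb_j}]$, invoking a linear/\malcev\ argument, and then lifting --- is not carried out, and it is not clear how to carry it out: these commutators live in $\alg A$, vary with the coordinate $j$, and there is no single quotient of $\alg A$ on which all the $i=0$ constraints simultaneously become equalities. Your own triangle example already shows that a bare Helly-type argument fails, and nothing concrete has replaced it.

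The paper's proof avoids all of this machinery with an elementary, constructive induction on $|S|$. The observation you are missing is pigeonhole on the $\chi$-values: if $|S|\ge 3$ then two triples share the same last coordinate, say $\chi_k=\chi_\ell$. Remove $(a_\ell,b_\ell,\chi_\ell)$ to get $S'$ with an \ldto\ $p$ by induction. Now build a strictly smaller set by keeping only the triples of the \emph{opposite} $\chi$-value together with the single new triple $(a_\ell,\,p(a_\ell,b_\ell,b_\ell),\,0)$ (resp.\ $(p(a_\ell,a_\ell,b_\ell),\,b_\ell,\,1)$); this set has an \ldto\ $q$ by induction, and one checks directly that
\[
d(x,y,z)=q\bigl(x,\;p(x,y,y),\;p(x,y,z)\bigr)\qquad\text{resp.}\qquad d(x,y,z)=q\bigl(p(x,y,z),\;p(y,y,z),\;z\bigr)
\]
is an \ldto\ for all of $S$. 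The verification uses only idempotence and the monotonicity $[\gamma,\gamma]\le[\theta,\theta]$ for $\gamma\le\theta$; no passage to abelian quotients or lifting is needed. As a bonus the argument is algorithmic: these exact composition formulas are what the paper later iterates to build the Cayley table of a difference term operation in polynomial time.
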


\begin{proof}

The proof is by induction on the size of $S$.  In the base case,
$|S| = 2$, the claim holds by assumption. Fix $\ell\geq 2$ and assume that
every subset of $\AsqBool$ of size $k \leq \ell$ has a \ldto. Let
\[
S := \{(a_0, b_0, \chi_0), (a_1, b_1, \chi_1), \dots,
        (a_{\ell}, b_{\ell},\chi_{\ell})\} \subseteq \AsqBool,
\]
so $|S| = \ell+1$.  We prove $S$ has a \ldto.

Since $|S| \geq 3$, 
there exist indices $k\neq j$ such that $\chi_k = \chi_j$.
Assume without loss of generality that one such index is $j=\ell$,
and define the set
\[
S' := S \mysetminus \{(a_\ell, b_\ell, \chi_\ell)\}.
\]
Since $|S'| < |S|$, there exists $p \in \dtr S'$.
We split the remainder of the proof into two cases.

\medskip


\newcommand{\thetal}{\ensuremath{\theta_{\ell}}}
\noindent \underline{Case $\chi_\ell = 0$}:
Without loss of generality, assume
\begin{equation*}
  \chi_0 = 
\cdots =\chi_{k-1} = 1 \quad \text{and} \quad
\chi_{k} = \cdots = \chi_{\ell} = 0.
\end{equation*}
If we define 
\[S_0 := \{(a_0, b_0, 1), (a_1, b_1, 1),
\dots, (a_{k-1}, b_{k-1}, 1), (a_\ell, p(a_\ell, b_\ell, b_\ell), 0)\},\]
then $|S_0| < |S|$, so $S_0$ has a \ldto, $q \in \dtr S_0$.
We now show that
\[
d(x,y,z) := q(x, p(x,y,y), p(x,y,z))
\]
is a local difference term operation for $S$.

Since $\chi_\ell =0$, we must check that
$a_\ell \comr{\thetal} d(a_\ell,b_\ell,b_\ell)$.
If $\gamma := \Cg(a_\ell, p(a_\ell,b_\ell,b_\ell))$, then
\begin{equation}
    \label{eq:100000}
  d(a_\ell,b_\ell,b_\ell) =
  q(a_\ell, p(a_\ell,b_\ell,b_\ell), p(a_\ell,b_\ell,b_\ell))\comr{\gamma} a_\ell.
\end{equation}
The pair $(a_\ell, p(a_\ell,b_\ell,b_\ell))$ is equal to
$(p(a_\ell,a_\ell,a_\ell), p(a_\ell,b_\ell,b_\ell))$ and so
belongs to $\thetal$.
Therefore, $\gamma\leq \thetal$, so
$\com{\gamma} \leq \com{\thetal}$.
It follows from this and (\ref{eq:100000}) that
$a_\ell \comr{\thetal} d(a_\ell,b_\ell,b_\ell)$, as desired.

For indices $i < k$, we have $\chi_i =1$, so
$d(a_i,a_i,b_i) = b_i$ for such $i$. Indeed,
\[
  d(a_i,a_i,b_i) =
  q(a_i, p(a_i,a_i,a_i), p(a_i,a_i,b_i)) 
  =q(a_i, a_i, b_i) 
  =b_i. 
\]
The first equation holds by definition of $d$, the second
because $p$ is an idempotent \ldto for
$S'$, and the third because $q \in \dtr S_0$.

The remaining triples in our original set $S$
have indices satisfying $k\leq j < \ell$ and $\chi_j = 0$.
Here, we have $a_j \comr{\thetaj} d(a_j,b_j,b_j)$. Indeed,
by definition,
\begin{equation}
  \label{eq:450000}
d(a_j,b_j,b_j) =q(a_j, p(a_j,b_j,b_j), p(a_j,b_j,b_j)),
\end{equation}
and, since $p \in \dtr S'$, we have
$a_j \comr{\thetaj} p(a_j,b_j,b_j)$,
so (\ref{eq:450000}) implies that
$a_j = q(a_j,a_j,a_j) \comr{\thetaj} d(a_j, b_j,b_j)$.


\medskip
\noindent \underline{Case $\chi_\ell = 1$}:
Without loss of generality, assume
\begin{equation*}
  \chi_0 =\cdots =\chi_{k-1} = 0
  \quad \text{and} \quad
\chi_{k} = \cdots = \chi_{\ell} = 1.
\end{equation*}
If
$S_1 := \{(a_0, b_0, 0), (a_1, b_1 0), \dots, (a_{k-1}, b_{k-1}, 0),
        (p(a_\ell, a_\ell, b_\ell), b_\ell, 1)\}$,\\[3pt]
then $|S_1| < |S|$, so there exists $q \in \dtr S_1$.
We claim  that
\begin{equation*}
  d(x,y,z) := q(p(x,y,z), p(y,y,z), z)
\end{equation*}
is a \ldto for $S$. For $(a_\ell,b_\ell,\chi_\ell) \in S$ we have that
\[
d(a_\ell,a_\ell,b_\ell) = q(p(a_\ell,a_\ell,b_\ell), p(a_\ell,a_\ell,b_\ell), b_\ell) =b_\ell.
\]
The last equality holds since $q \in \dtr S_1$.

If $i < k$, then $\chi_i =0$. For these indices we must prove
that $a_i$ is congruent to $d(a_i,b_i,b_i)$ modulo $\com{\thetai}$.
Again, starting from the definition of $d$ and using idempotence of $p$, we have
\begin{equation}
  \label{eq:40000}
  d(a_i,b_i,b_i) =
  q(p(a_i,b_i,b_i), p(b_i,b_i,b_i), b_i)=
  q(p(a_i,b_i,b_i), b_i, b_i).
\end{equation}
Next, since $p \in \dtr S'$,
\begin{equation}
  \label{eq:50000}
  q(p(a_i,b_i,b_i), b_i, b_i)
 \comr{\thetai}
 q(a_i, b_i, b_i).
\end{equation}
Since $q \in \dtr S_1$, we have
$q(a_i, b_i, b_i) \comr{\thetai} a_i$, so
(\ref{eq:40000}) and (\ref{eq:50000}) imply
$d(a_i,b_i,b_i) \comr{\thetai} a_i$, as desired.

The remaining elements of $S$
have indices satisfying $k\leq j < \ell$ and $\chi_j = 1$.
For these we want $d(a_j,a_j,b_j) = b_j$.
Since $p \in \dtr S'$, we have
$p(a_j,a_j,b_j) = b_j$, and this plus idempotence of $q$ yields
\begin{equation*}
 d(a_j,a_j,b_j) =  q(p(a_j,a_j,b_j), p(a_j,a_j,b_j), b_j)=  q(b_j, b_j, b_j) =b_j,
\end{equation*}
as desired.
\end{proof}

\begin{corollary}
  \label{cor:loc-diff-term}
  Let $\alg A$ be a finite idempotent algebra and suppose that
  every pair $(s, s') \in (\AsqBool)^2$ has a local difference term operation.
  Then $\dtr (\AsqBool) \neq \emptyset$,
  so $\alg{A}$ has a difference term operation.
\end{corollary}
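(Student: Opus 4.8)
The plan is to obtain the difference term operation by feeding the largest possible index set into Theorem~\ref{thm:local-diff-terms}. Since $\alg A$ is finite, the set $\AsqBool = A^2 \times \{0,1\}$ is itself a finite subset of $\AsqBool$, so the hypothesis of Theorem~\ref{thm:local-diff-terms} applies and yields $\dtr(\AsqBool) \neq \emptyset$. I would then fix any term operation $t \in \dtr(\AsqBool)$ and argue that $t$ is a difference term operation for $\alg A$.

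Next I would unpack what membership in $\dtr(\AsqBool)$ means. By definition $t$ is a \ldto for every triple $(a,b,i)$ with $a,b \in A$ and $i \in \{0,1\}$. Reading off condition~(\ref{eq:diff-triple2}) for the triples of the form $(a,b,1)$ gives $t(a,a,b) = b$ for all $a,b \in A$, which is the first of the two defining relations~(\ref{eq:3}) of a difference term operation. Reading off condition~(\ref{eq:diff-triple1}) for the triples $(a,b,0)$ gives $a \mathrel{[\thetaab,\thetaab]} t(a,b,b)$ for all $a,b \in A$, where $\thetaab = \Cg^{\alg A}(a,b)$.

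The only gap between this and the second defining relation in~(\ref{eq:3}) is the quantifier over congruences: the definition of a difference term operation requires $t(a,b,b) \mathrel{[\theta,\theta]} a$ for \emph{every} congruence $\theta$ containing $(a,b)$, whereas the \ldto\ condition only records the case $\theta = \thetaab$. I would close this gap using monotonicity of the commutator. Since $\thetaab = \Cg^{\alg A}(a,b)$ is the least congruence containing $(a,b)$, any congruence $\theta$ with $(a,b) \in \theta$ satisfies $\thetaab \leq \theta$, and hence $[\thetaab,\thetaab] \leq [\theta,\theta]$. Thus $a \mathrel{[\thetaab,\thetaab]} t(a,b,b)$ forces $a \mathrel{[\theta,\theta]} t(a,b,b)$, giving the required relation for all such $\theta$. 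Together with $t(a,a,b) = b$ this shows $t$ satisfies~(\ref{eq:3}), so $t$ is a difference term operation for $\alg A$.

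There is no real obstacle here: all the combinatorial work has already been carried out in Theorem~\ref{thm:local-diff-terms}, and the corollary is just the observation that applying that theorem to the full finite index set $\AsqBool$, combined with the monotonicity of the commutator to upgrade from $\thetaab$ to an arbitrary $\theta \supseteq \Cg^{\alg A}(a,b)$, produces a genuine difference term operation.
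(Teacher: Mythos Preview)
Your argument is correct and is essentially the same as the paper's: apply Theorem~\ref{thm:local-diff-terms} with $S = \AsqBool$ to obtain $d \in \dtr(\AsqBool)$, then read off the two difference-term identities from the triples $(a,b,1)$ and $(a,b,0)$. You spell out the commutator-monotonicity step (from $\thetaab$ to an arbitrary $\theta \ni (a,b)$) more explicitly than the paper does, but otherwise the proofs coincide.
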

\begin{proof}
  Letting $S := \AsqBool$ in Theorem~\ref{thm:local-diff-terms} establishes
  the existence of a \ldto $d$ for $S$.  That is, $d \in \dtr S$.
  It follows that $d$ is a difference
  term operation for $A$. Indeed, for all $a, b \in A$, we have that
  $a \comr{\thetaab} d(a,b,b)$, since $d \in \dtr S \subseteq \dtr (a,b,0)$,
  and $d(a,a,b) = b$, since $d\in \dtr S \subseteq \dtr (a,b,1)$.
\end{proof}
%
%


\subsection{Test for existence of a difference term operation}
\label{sec:algor-1}
Here is a practical consequence of Theorem~\ref{thm:local-diff-terms}.
\begin{corollary}
  \label{cor:algor-1}
  There is a polynomial-time algorithm that takes as input
  any finite idempotent algebra $\alg A $ and decides if
  $\alg A $ has a difference term operation.
\end{corollary}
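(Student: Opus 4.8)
The plan is to reduce the existence of a difference term operation to a finite, polynomially-checkable condition by invoking Theorem~\ref{thm:local-diff-terms} and Corollary~\ref{cor:loc-diff-term}. By Corollary~\ref{cor:loc-diff-term}, the algebra $\alg A$ has a difference term operation precisely when every pair $(s_0, s_1) \in (\AsqBool)^2$ has a local difference term operation. Since $|\AsqBool| = 2n^2$, there are at most $O(n^4)$ such pairs, so if each individual pairwise test can be carried out in polynomial time, the whole test runs in polynomial time. Thus the entire algorithmic content is concentrated in the question: \emph{given a single pair $(s_0, s_1)$ of triples, can we decide in polynomial time whether some ternary term operation $t$ is simultaneously a \ldto for both?}

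First I would observe that the defining conditions~(\ref{eq:diff-triple1}) and~(\ref{eq:diff-triple2}) for being a \ldto are congruence conditions on the \emph{values} of $t$ at a bounded number of argument-triples drawn from $A$, namely at the points $(a,b,b)$ and $(a,a,b)$ coming from each of $s_0$ and $s_1$. The natural approach is a closure/reachability computation in a suitable power of $\alg A$. Concretely, form the subalgebra $\alg F$ of $\alg A^{A^3}$ (or, more efficiently, of a small power indexed by the finitely many relevant argument-triples) generated by the three coordinate projections $\pi_x, \pi_y, \pi_z$; the elements of this subalgebra are exactly the ternary term operations of $\alg A$ restricted to the relevant inputs. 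A term operation $t$ exists satisfying the required conditions if and only if the closure of $\{\pi_x,\pi_y,\pi_z\}$ under the basic operations contains an element whose coordinates satisfy the prescribed equalities (for the $\chi=1$ conditions, $t(a,a,b)=b$) and congruence memberships (for the $\chi=0$ conditions, $a \comr{\thetaab} t(a,b,b)$). The equality constraints are immediate to check coordinatewise; the congruence constraints require first computing the relevant congruences $\thetaab = \Cg^{\alg A}(a,b)$ and the associated commutators $\com{\thetaab}$, which by Proposition~\ref{speedprop} can be done in polynomial time.

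The main obstacle I expect is organizing this search so that it stays polynomial rather than exploring an exponential term space. The key point is that we must not enumerate terms; instead we enumerate \emph{reachable tuples of values}, of which there are only polynomially many because each coordinate ranges over $A$ and the number of tracked coordinates (the argument-triples $(a,b,b)$, $(a,a,b)$ arising from the two triples in the pair, plus the generators) is bounded by a constant. So the generated subalgebra lives in a power $\alg A^k$ with $k$ bounded, giving at most $n^k$ reachable tuples, and the closure under the basic operations is computed by part~(\ref{speed1}) of Proposition~\ref{speedprop} in time polynomial in $\|\alg A\|$. Once the closure is computed, we test each reachable tuple against the (precomputed) equality and congruence-modulo-commutator constraints; if any tuple passes, the pair has a \ldto, otherwise it does not.

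Putting these together, the algorithm is: precompute all congruences $\Cg^{\alg A}(a,b)$ and their self-commutators $\com{\thetaab}$; then for each of the $O(n^4)$ pairs $(s_0,s_1)$, compute the constant-exponent closure just described and scan it for a tuple meeting the pair's constraints. If every pair passes, Corollary~\ref{cor:loc-diff-term} guarantees a global difference term operation and we answer ``Yes''; if some pair fails, then by Corollary~\ref{cor:loc-diff-term} (in its contrapositive form) no difference term operation can exist and we answer ``No.'' Each stage is polynomial in $\|\alg A\|$, and there are only polynomially many pairs, so the overall running time is polynomial, establishing the corollary.
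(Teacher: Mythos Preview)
Your proposal is correct and follows essentially the same approach as the paper: reduce via Corollary~\ref{cor:loc-diff-term} to checking each pair, and for each pair test whether the subalgebra of a bounded power of $\alg A$ generated by the projections meets the target set determined by the equality and commutator constraints. The paper carries this out slightly more concretely---it first disposes of the trivial cases $i=i'$ by a projection, and in the remaining case $(i,i')=(0,1)$ it works explicitly in $\alg A^2$ with generators $(a,a'),(b,a'),(b,b')$ and target $a/\com{\thetaab}\times\{b'\}$, arriving at an explicit $O(rm^2n^4+n^7)$ bound---but the underlying idea is identical to yours. (One small quibble: the ``No'' direction is not the contrapositive of Corollary~\ref{cor:loc-diff-term} but rather the trivial observation that a global difference term operation is automatically a \ldto\ for every pair.)
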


\begin{proof}
  We describe an efficient algorithm for deciding,
  given a finite idempotent algebra $\alg A $,
  whether every pair in $(A^2 \times \{0,1\})^2$
  has a \ldto.  By Corollary~\ref{cor:loc-diff-term}, this will prove we
  can decide in polynomial-time whether $\alg A $ has a difference term operation.

  Fix a pair
  $((a,b,i), (a',b',i'))$ in $(A^2 \times \{0,1\})^2$. If $i = i' = 0$,
  then the first projection is a \ldto. If $i = i' = 1$,
  then the third projection is a \ldto. The two remaining cases
  occur when $i\neq i'$. Without loss of generality, assume $i = 0$ and $i'=1$,
  so the given pair of triples is of the form $((a,b,0), (a',b',1))$.
  By definition, $t \in \dtr \{(a,b,0), (a',b',1)\}$ iff
  \[
  a\comr{\thetaab} t^{\alg A }(a,b,b) \; \text{ and } \;
  t^{\alg A }(a',a',b') = b'.
  \]
  We can rewrite this condition more compactly by noting that
  \[t^{\alg{A} \times \alg{A} }((a,a'), (b,a'), (b,b')) =
  (t^{\alg A }(a,b,b),t^{\alg A }(a',a',b')),\]
  and that
  $t \in \dtr \{(a,b,0), (a',b',1)\}$ if and only if
  \[
  t^{\alg A \times \alg A }((a,a'), (b,a'), (b,b'))\in a/\delta \times \{b'\},
  \]
  where $\delta = \com{\thetaab}$ and $a/\delta$ denotes the
  $\delta$-class containing $a$.
  It follows that $\dtr \{(a,b,0), (a',b',1)\} \neq \emptyset$
  iff the subuniverse of $\alg A \times \alg A $ generated by
  $\{(a,a'), (b,a'), (b,b')\}$ intersects nontrivially with the subuniverse
  $a/\delta \times \{b'\}$.

  Thus, we take as input a finite idempotent algebra $\alg A $ and,
  for each element $((a,a'), (b,a'), (b,b'))$ of $(A\times A)^3$,
  \begin{enumerate}
    \item compute $\thetaab$,
    \item compute $\delta = \com{\thetaab}$,
    \item compute $\bS = \Sg^{\alg A \times \alg A }\{(a,a'), (b,a'), (b,b')\}$,
    \item \label{item:a3} test whether $S \cap (a/\delta \times \{b'\})$ is empty.
  \end{enumerate}
  If ever we find an empty intersection in step (\ref{item:a3}), then
  $\alg A $ has no difference term operation.
  Otherwise the algorithm halts without witnessing an empty
  intersection, in which case $\alg A $ has a difference term operation.

  Finally, we analyze the time-complexity of the procedure just described,
  using the same notation and complexity bounds as those appearing in
  Section~\ref{sec:algorithm-its-time}.  Recall, $n = |A|$, and
  $m=\|\alg A\| = \sum_{i=0}^r k_i n^i$, where $k_i$ is the number of basic
  operations of arity~$i$, and $r$ is the largest arity of the basic
  operations of $\alg{A}$. The following assertions are consequences of (the
  proof of) Proposition~\ref{speedprop}: $\thetaab$,
  $\delta$, and $S$ are computable in time $O(rm)$, $O(rm^2 + n^5)$,
  and $O(rm^2)$, respectively;
  $\thetaab$ and $\delta$ are computed for each pair $(a,b) \in A^2$;
  $S$ is computed for each triple of the form
  $((a,a'),(b,a'),(b,b'))\in (A\times A)^3$, and there are $n^4$ such triples.
  Thus, the computational complexity of the above procedure is
  $O(rm^2n^4 + n^7)$.
\end{proof}

\subsection{Test for existence of a difference term}
\label{sec:diffterm-test}
The main result of this subsection is Proposition~\ref{prop:alt-poly-alg}, which provides another avenue for constructing a polynomial-time algorithm to decide if a finite idempotent algebra generates a variety that has a difference term (cf.~Theorem~\ref{thm:time}).
One step of the proof requires a minor technical lemma, so we dispense with that obligation first.
\begin{lemma} \label{lem:dt-quotients}
  Let $\alg A$ be a finite idempotent algebra.
  If $\alg A$ has a difference term operation then so does every quotient of $\alg A$.
\end{lemma}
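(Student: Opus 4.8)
The plan is to fix a congruence $\phi \in \Con(\alg A)$, write $\nu\colon \alg A \to \alg A/\phi$ for the canonical surjection, and show that if $d$ is a difference term operation for $\alg A$ then the interpretation $d^{\alg A/\phi}$ of the \emph{same} term is a difference term operation for $\alg A/\phi$; since every quotient of $\alg A$ is of the form $\alg A/\phi$, this suffices. The first defining equation, $d(x,x,y)=y$, is an identity of $\alg A$ and so persists in every quotient, giving $d^{\alg A/\phi}(\bar a,\bar a,\bar b)=\bar b$ for all $\bar a,\bar b\in A/\phi$. Thus all of the work lies in the commutator condition.

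For the commutator condition, fix $\bar a,\bar b\in A/\phi$ and a congruence $\bar\theta\in\Con(\alg A/\phi)$ with $(\bar a,\bar b)\in\bar\theta$; I must show $d^{\alg A/\phi}(\bar a,\bar b,\bar b)\mathrel{[\bar\theta,\bar\theta]}\bar a$. By the correspondence theorem $\bar\theta=\theta/\phi$ for a unique $\theta\in\Con(\alg A)$ with $\phi\le\theta$, and after choosing preimages $a,b$ of $\bar a,\bar b$ we have $(a,b)\in\theta$. Since $d$ is a difference term operation for $\alg A$, the relation $(a,b)\in\theta$ yields $d^{\alg A}(a,b,b)\mathrel{[\theta,\theta]}a$. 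Because $\nu$ is a homomorphism, $d^{\alg A/\phi}(\bar a,\bar b,\bar b)=\nu\bigl(d^{\alg A}(a,b,b)\bigr)$, so it remains only to transport the relation $[\theta,\theta]$ across $\nu$.

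The one substantive step is therefore the commutator inequality
\[
\nu\bigl([\theta,\theta]\bigr)\;\subseteq\;[\theta/\phi,\ \theta/\phi],
\]
equivalently $\bigl([\theta,\theta]\join\phi\bigr)/\phi\le[\theta/\phi,\theta/\phi]$. I expect this to be the main obstacle, although it is a standard monotonicity property of the term-condition commutator. I would prove it directly from the term condition: let $\delta$ be the $\nu$-preimage of $[\theta/\phi,\theta/\phi]$ (so $\phi\le\delta$), and verify the centralizer condition $C(\theta,\theta;\delta)$, for then $[\theta,\theta]\le\delta$ and applying $\nu$ gives the displayed containment. Concretely, given a polynomial $p$ of $\alg A$ and $\theta$-related arguments realizing an instance of the term condition relative to $\delta$, project the instance along $\nu$: pairs in $\delta$ become pairs in $[\theta/\phi,\theta/\phi]$, and the centralizer condition $C(\theta/\phi,\theta/\phi;[\theta/\phi,\theta/\phi])$ holds by the very definition of the commutator in $\alg A/\phi$, so the conclusion of the term condition holds there; lifting back through $\nu$, using that $\delta$ is a full preimage, recovers the required $\delta$-relation in $\alg A$.

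Assembling the pieces, $d^{\alg A/\phi}(\bar a,\bar b,\bar b)=\nu\bigl(d^{\alg A}(a,b,b)\bigr)$ is $\nu([\theta,\theta])$-related, hence $[\bar\theta,\bar\theta]$-related, to $\nu(a)=\bar a$; together with the first equation this shows $d^{\alg A/\phi}$ is a difference term operation for $\alg A/\phi$. A reader content to cite the quotient behavior of the term-condition commutator could skip the third paragraph entirely, reducing the proof to the correspondence-theorem bookkeeping of the second.
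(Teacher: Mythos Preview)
Your proof is correct and follows essentially the same route as the paper's: both verify that the same term $d$ works in the quotient by transferring the centrality condition $C(\theta,\theta;\delta)$ across the quotient map, where $\delta$ is the preimage of the relevant congruence in $\alg A/\phi$. The only cosmetic differences are that the paper works with the principal congruence $\Cg^{\alg A}(a,b)$ (and then invokes monotonicity of centrality) rather than directly lifting $\bar\theta$, and cites \cite[Proposition~3.4(5)]{HM:1988} for the centrality transfer you sketch by hand.
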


\begin{proof}
  Let $t^{\alg A}$ be a difference term operation for $\alg A$.  Fix $\gamma \in \Con \alg A$.
We show $t^{\alg A/\gamma}$ is a difference term operation for $\alg A/\gamma$.  Let $a$, $b \in A$ and let $\theta:= \Cg^{\alg A}(a,b)$ and $\theta_\gamma := \Cg^{\alg A/\gamma}(a/\gamma, b/\gamma)$.
Obviously, $t^{\alg A}(a,a,b) = b$  implies
$t^{\alg A/\gamma}(a/\gamma, a/\gamma, b/\gamma) = b/\gamma$.

We need to show that $(a/\gamma, t^{\alg A}(a,b,b)/\gamma) \in \com{\theta_\gamma}$.  It will suffice to show that if $\beta \in \con(\alg A/\gamma)$  is such that $C(\theta_\gamma, \theta_\gamma; \beta)$ then $(a/\gamma, t^{\alg A}(a,b,b)/\gamma) \in \beta$.  But such a congruence $\beta$ is of the form $\delta/\gamma$ for some $\delta \in \con(\alg A)$ with $\delta \ge \gamma$. Also, the congruence $\theta_\gamma$ is equal to $(\gamma \join \theta)/\gamma$ and so the condition $C(\theta_\gamma, \theta_\gamma; \beta)$ can be restated as
\[
C((\gamma \join \theta)/\gamma, (\gamma \join \theta)/\gamma; \delta/\gamma).
\]
By Proposition 3.4 (5) of~\cite{HM:1988}, this is equivalent to
\[
C((\gamma \join \theta), (\gamma \join \theta); \delta).
\]
From this condition, we can use the monotonicity of the centrality condition (see Proposition 3.4 (1) of~\cite{HM:1988}) to deduce that $C(\theta, \theta; \delta)$. Since $t$ is a difference term operation for $\alg A$ then $(a, t^{\alg A}(a,b,b)) \in \delta$ and so $(a/\gamma, t^{\alg A}(a,b,b)/\gamma) \in  \delta/\gamma = \beta$ as required.
\end{proof}

\begin{proposition}\label{prop:alt-poly-alg}
  Let $\alg A$ be a finite idempotent algebra.  Then $\bbV (\alg A)$ has a difference term if and only if each 3-generated subalgebra of $\alg A^2$ has a difference term operation.
\end{proposition}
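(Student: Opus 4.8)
The plan is to deduce the Proposition from Corollary~\ref{cor:diffterm}, which says that $\bbV(\alg A)$ has a difference term exactly when (1)~$\sansH\sansS(\alg A)$ contains no algebra term-equivalent to the two-element set, and (2)~no $3$-generated subalgebra of $\alg A^2$ has a type~\atyp\ prime quotient whose minimal sets have nonempty tails. One implication is immediate: a difference term $d$ for $\bbV(\alg A)$ interprets in every member of $\bbV(\alg A)$ as a difference term operation, in particular in every $3$-generated subalgebra of $\alg A^2$. For the converse I assume each such subalgebra has a difference term operation and verify~(1) and~(2).

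For~(1) I first note that if $\alg D\le\alg A$ then $\alg D^2\le\alg A^2$, so every $3$-generated subalgebra of $\alg D^2$ is one of $\alg A^2$ and hence has a difference term operation. Rerunning the analysis of Corollary~\ref{cor:algor-1} inside $\alg D$ then shows every pair in $(D^2\times\{0,1\})^2$ has a local difference term operation (projections handle the equal-parity pairs, and for a mixed pair a difference term operation of $\Sg^{\alg D^2}\{(a,a'),(b,a'),(b,b')\}$ supplies the witness required by that corollary), so by Corollary~\ref{cor:loc-diff-term} the algebra $\alg D$ has a difference term operation. Thus every subalgebra of $\alg A$, and by Lemma~\ref{lem:dt-quotients} every quotient of such a subalgebra, has a difference term operation; i.e.\ every member of $\sansH\sansS(\alg A)$ does. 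Since the two-element set is abelian with only projections for term operations, a difference term operation on it would have to be Mal'cev and so cannot exist; hence $\sansH\sansS(\alg A)$ omits it, giving~(1).

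For~(2) fix a $3$-generated $\alg C\le\alg A^2$; it has a difference term operation $D$, and I must rule out a type~\atyp\ prime quotient $\delta\prec\theta$ of $\alg C$ with nonempty tails. Suppose one exists, with minimal set $U=e(C)$. Since the type is \atyp\ we have $[\theta,\theta]\le\delta$, so after factoring by $\delta$ (Lemma~\ref{lem:dt-quotients} keeps a difference term operation) I may assume $\delta=0$ and $\theta$ is an abelian atom; then $D$ is Mal'cev on the $\theta$-blocks (for $a\mathrel\theta b$ we get $D(a,a,b)=b$ and $D(a,b,b)=a$, using $[\theta,\theta]=0$). The key local fact is that for $b\in\body(U)$ and $t\in\tail(U)$ one has $\theta\le\Cg(b,t)$: the polynomial $\lambda(x)=e(D(b',b,x))$, with $b'$ another element of the trace of $b$, satisfies $\lambda(b)=b'$ and $\lambda(t)=t$, so applying $\lambda$ to $(b,t)\in\Cg(b,t)$ gives $(b',t)\in\Cg(b,t)$, hence $(b,b')\in\Cg(b,t)\cap\theta$, and as $\theta$ is an atom $\Cg(b,t)\ge\theta$. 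The difference-term identity $D(b,t,t)\mathrel{[\Cg(b,t),\Cg(b,t)]}b$, combined with this containment and the affine structure of the minimal set, is then incompatible with $t$ lying in the tail.

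The incompatibility in that last step is the part I expect to be the \emph{main obstacle}: it is the single-algebra refinement of Kearnes' characterization (Theorem~\ref{thm:KearnesThm}), to the effect that a difference term operation forces the minimal sets of its type~\atyp\ prime quotients to be tail-free, and its proof rests on the fine analysis of affine minimal sets in tame congruence theory (cf.~\cite{MR1358491,HM:1988}). One must resist the temptation to pass to a pentagon in a subalgebra of $\alg C^2$, since a difference term \emph{operation} (unlike a difference \emph{term}) need not be inherited by such a subalgebra; the argument has to stay inside $\alg C$ itself. Granting the obstacle, every $3$-generated subalgebra of $\alg A^2$ satisfies~(2), and Corollary~\ref{cor:diffterm} then yields that $\bbV(\alg A)$ has a difference term, completing the converse. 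The remaining steps—the commutator bookkeeping when projecting the difference-term identity to a coordinate in the second paragraph, and the transfer of $U$ and its tail to $\alg C/\delta$—are routine given the results already established.
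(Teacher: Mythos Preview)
Your proposal is correct and follows essentially the same route as the paper: both directions go through Corollary~\ref{cor:diffterm}, and the crux of the converse is exactly the single-algebra refinement of Kearnes' theorem you flag as the main obstacle---that a difference term \emph{operation} on an algebra already forces its type~\atyp\ minimal sets to have empty tails.  The paper handles this in the same way you do, by pointing to the proof of Theorem~\ref{thm:KearnesThm} and observing that what is really used there is only that all quotients of the algebra have difference term operations, which by Lemma~\ref{lem:dt-quotients} follows from the algebra itself having one.

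The one place where your argument is more elaborate than necessary is condition~(1).  You show that \emph{every} subalgebra $\alg D\le\alg A$ has a difference term operation by rerunning the local-difference-term machinery inside $\alg D$ (this works, though the ``commutator bookkeeping'' you defer does require checking that $[\theta',\theta']^{\alg D_0}\le[\theta,\theta]^{\alg D}\big|_{D_0}$ for $\alg D_0=\Sg^{\alg D}(a,b)$, which follows from restricting the centrality relation to a subalgebra).  The paper takes a shorter path: a $2$-generated subalgebra of $\alg A$ sits diagonally as a $3$-generated subalgebra of $\alg A^2$, so it has a difference term operation directly by hypothesis; Lemma~\ref{lem:dt-quotients} then passes this to any $2$-element quotient, which cannot be a set.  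Both arguments are fine, but the paper's avoids the detour through Corollaries~\ref{cor:loc-diff-term} and~\ref{cor:algor-1}.
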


\begin{proof}
  Of course if $\bbV (\alg A)$ has a difference term then each 3-generated subalgebra of
  $\alg A^2$ has a difference term operation. For the converse, we refer to the proof of
  Theorem~\ref{thm:KearnesThm}~\cite[Theorem 3.8]{MR1358491}.  One part of this theorem
  establishes that if an algebra $\alg B$ is in a variety that has a difference term,
  then all type~\atyp minimal sets of $\alg B$ have empty tails.  A careful reading of the
  proof of this fact shows that a weaker hypothesis will suffice, namely that all quotients
  of $\alg B$ have difference term operations.  By Lemma~\ref{lem:dt-quotients}, this is
  equivalent to just $\alg B$ having a difference term operation.

  To complete the proof of this proposition, we use Corollary~\ref{cor:diffterm}. Suppose that each 3-generated subalgebra of $\alg A^2$ has a difference term operation.  Then, in particular, each 2-generated subalgebra of $\alg A$ does.  This rules out that $\sansH \sansS (\alg{A})$ contains an algebra that is term equivalent to the 2-element set.  It also follows, from the previous paragraph, that no 3-generated subalgebra of $\alg A^2$ has a prime quotient of type~\atyp whose minimal sets have nonempty tails.
\end{proof}

\begin{corollary}
  There is a polynomial-time algorithm to decide if a finite idempotent algebra $\alg A$ generates a variety that has a difference term.
\end{corollary}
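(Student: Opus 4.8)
The plan is to combine Proposition~\ref{prop:alt-poly-alg} with the algorithm supplied by Corollary~\ref{cor:algor-1}. By Proposition~\ref{prop:alt-poly-alg}, the variety $\bbV(\alg A)$ has a difference term if and only if every $3$-generated subalgebra of $\alg A^2$ has a difference term \emph{operation}. Thus the decision problem reduces to enumerating the $3$-generated subalgebras of $\alg A^2$ and testing each one for a difference term operation, and it remains only to verify that both the enumeration and the individual tests can be carried out in polynomial time.

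First I would note that the $3$-generated subalgebras of $\alg A^2$ are precisely the subuniverses $\Sg^{\alg A^2}\{p_0,p_1,p_2\}$ as $(p_0,p_1,p_2)$ ranges over $(A^2)^3$. Since $|A^2| = n^2$, there are at most $n^6$ such triples, so only polynomially many subalgebras need to be examined. For each triple, the subalgebra it generates is computable in polynomial time by Proposition~\ref{speedprop}(\ref{speed1}) (applied to $\alg A^2$), has at most $n^2$ elements, and is again finite and idempotent, since idempotence passes to subalgebras of powers. Hence Corollary~\ref{cor:algor-1} applies to each such subalgebra $\alg B$ and decides, in time polynomial in $\|\alg B\|$, whether $\alg B$ has a difference term operation. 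The overall algorithm returns ``Yes'' exactly when every one of these subalgebras passes the test, and ``No'' as soon as one fails.

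Correctness is immediate from Proposition~\ref{prop:alt-poly-alg}, and the total running time is the (polynomial) number of subalgebras examined times the (polynomial) cost of each test. The only step requiring care---and the mild obstacle---is the bookkeeping of input sizes: one must check that passing from $\alg A$ to a subalgebra of $\alg A^2$ inflates the parameters $n$ and $m = \|\alg A\|$ only polynomially, so that the per-subalgebra bound from Corollary~\ref{cor:algor-1} stays polynomial in $\|\alg A\|$. This is clear, since $|\alg A^2| = n^2$ while the number of basic operations is unchanged, so $\|\alg B\|$ is bounded by a polynomial in $\|\alg A\|$ for every subalgebra $\alg B$ of $\alg A^2$. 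Combining these observations yields the claimed polynomial-time algorithm.
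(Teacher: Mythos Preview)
Your proposal is correct and follows essentially the same approach as the paper: reduce via Proposition~\ref{prop:alt-poly-alg} to testing each $3$-generated subalgebra of $\alg A^2$ for a difference term operation, note there are at most about $n^6$ such subalgebras, and apply the polynomial-time test from Corollary~\ref{cor:algor-1} to each. Your added remarks about input-size inflation when passing to subalgebras of $\alg A^2$ are a helpful bit of bookkeeping that the paper leaves implicit.
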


\begin{proof}
  By the previous proposition, it suffices to check whether each 3-generated subalgebra of
  $\alg A^2$ has a difference term operation. This can be decided by applying the
  algorithm from Corollary~\ref{cor:algor-1} at most $\binom{n^2}{3}$ ($\approx n^6$) times,
  so the total running time of this decision procedure is $O(rm^2n^{10} + n^{13})$ (cf.~$O(rn^4m^4 + n^{14}),$
  the complexity of the difference term existence test of Theorem~\ref{thm:time}).
\end{proof}

\section{Efficiently computing difference term tables}
\label{sec:comp-diff-term}
In this section we present a polynomial-time algorithm that takes
a finite idempotent algebra $\alg{A}$ and constructs the Cayley table of
a difference term operation for $\alg{A}$, if such a term exists.

The method consists of three subroutines.
Algorithm~\ref{alg:ld-2} finds Cayley tables of \ldtos for sets of size 2,
and Algorithm~\ref{alg:new-2-1} calls Algorithm~\ref{alg:ld-2} repeatedly to find
tables of \ldtos for larger and larger sets up to size $n^2+1$.
Finally, Algorithm~\ref{alg:new-2-3}
calls these subroutines in order to produce \ldtos for larger subsets of
$A^2 \times \{0,1\}$.

Some new notation will be helpful here.  Suppose that $ u \in \Sg^{\alg{A}}(X)$.
Then there is a term $t$ of some arity $k$ that, when applied to a certain tuple of
generators $(x_1, \dots, x_k)$, produces $u$.
In this situation, we may ask for the operation table (or ``Cayley table'')
for $t^{\alg{A}}$, which is an $A^k$-dimensional array whose $(a_1, \dots, a_k)$-entry
is the value $t^{\alg{A}}(a_1, \dots, a_k)$.
If $\sanst$ is such a table, we will denote the
entry at position $(a_1, \dots, a_k)$ of the table by $\sanst[a_1, \dots, a_k]$.
Alternatively, if we wish to emphasize the means by which
we arrived at the term that the table represents, we may use
$\sanst_{x_1,\dots, x_k \mid u}$ to denote the table.

\newcommand{\tripzu}{\ensuremath{(a_0, b_0, 0), (a_1, b_1, 1)}}

\newcommand{\triptik}{\ensuremath{(a,a'),(b,a'),(b,b')}}
\newcommand{\pairtik}{\ensuremath{(a,b, 0), (a', b', 1)}}
\newcommand{\abb}{\ensuremath{(a, b, b)}}
\newcommand{\aabtik}{\ensuremath{(a', a', b')}}

\newcommand{\cabxbtik}{\ensuremath{a/\com{\theta_{ab}} \times \{b'\}}}
\subsection{Base step} 
\label{sec:cc-ld-2}
The first step of our method finds a ternary operation on
$\alg{A} \times \alg{A}$ that maps the element
$(\triptik)$ into the set
$\cabxbtik$. In other words, the first step finds a \ldto for
$\{\pairtik\}$.
When such an operation is found, its Cayley table---a
$3$-dimensional array $\sanst$ satisfying
$\sanst[a,b, b] \mathrel{\delta} a$ and  $\sanst[a',a',b'] = b'$---is
returned.

\RestyleAlgo{boxruled}
\begin{algorithm}

  \KwIn{$S = \{\pairtik\}$}
  \KwOut{Cayley table of an operation in $\dtr S$}

  compute $\delta = \com{ \thetaab }$ and form $C= (a/\delta) \times \{b'\}$\;
  compute $S=\Sg^{\alg{A}\times \alg{A}} (\triptik)$\;
  \ForAll{$(u,v)  \in S$} {
    compute the table $\sanst_{\triptik\mid (u,v)}$\;
    \If{$(u,v) \in C$} {
      \Return $\sanst_{\triptik \mid (u,v)}$\;
    }
  \Return false (i.e., there is no difference term operation)
  }

  \caption{Generate the Cayley table of a \ldto for $\{\pairtik \}$}
  \label{alg:ld-2}

\end{algorithm}
Note that the subalgebra $S$ in Algorithm~\ref{alg:ld-2}
need not be computed in its entirety before
the condition inside the {\bf forall} loop is tested.  Naturally, we test
$(u,v) \in C$ as soon as the new element $(u,v) \in S$ is generated.

Let us now consider the computational complexity of
Algorithm~\ref{alg:ld-2}.
Recall the notation introduced in Section~\ref{sec:algorithm-its-time};
\begin{align*}
  n &= |A|, \quad m=\|\alg A\| = \sum_{i=0}^r k_i n^i,\\
k_i&= \text{ the number of basic operations of arity~$i$},\\
r &= \text{ the largest arity of the basic operations of $\alg{A}$.}
\end{align*}
Also, $\sanst_{\triptik |(u,v)}$ denotes the Cayley table of a
term operation that generates $(u,v)$ from the set $\{\triptik \}$.

Algorithm~\ref{alg:ld-2} can be implemented as follows:
\begin{enumerate}
  \item compute $\thetaab$, in time $O(rm)$;
  \item compute $C= a/\com{ \thetaab } \times \{b'\}$,
  in time $O(rm^2 + n^5)$;
  \item generate $S=\Sg^{\alg{A}\times \alg{A}} \{\triptik\}$,
    in time $O(r m^2)$;\\
    for each newly generated $(u,v) \in S$,
  \begin{itemize}
    \item construct and store the table
      $\sanst_{\triptik |(u,v)}$;
    \item if $(u,v) \in C$, then return $\sanst_{\triptik |(u,v)}$;
  \end{itemize}
\end{enumerate}

Each element $(u,v)\in S$ is the result of applying some (say, $k$-ary)
basic operation $f$ to previously generated pairs $(u_1, v_1)$, $\dots$, $(u_k,v_k)$
from $S$, and the operation tables generating these pairs were
already stored (the first bullet of Step 3).  Thus, to compute the table
for the operation that produced $(u,v) = f((u_1,v_1),\dots, (u_k,v_k))$
we simply compose $f$ with previously stored operation tables.
Since all tables represent ternary operations, the time-complexity of this composition
is $|A|^3$-steps multiplied by $k$ reads per step; that is,
$kn^3 \leq rn^3$.
All told, the time-complexity of Algorithm~\ref{alg:ld-2} is 
$O(r^2m^2n^3 + n^5)$.

\subsection{Inductive stages}
The method is based on the proof of Theorem~\ref{thm:local-diff-terms} and consists of $n^2$
stages, each of which makes $n^2$ calls to Algorithm~\ref{alg:ld-2}.

\renewcommand{\l}{\ensuremath{\ell}}
Let $\l := n^2 -1$ and let $\{(a_0, b_0), (a_1, b_1), (a_2, b_2), \dots, (a_{\ell}, b_{\ell})\}$
be an enumeration of the set $A^2$.
For all $1\leq k \leq n^2$, define
\begin{align*}
  Z_k &:= \{(a_0, b_0,0),(a_1, b_1,0), \dots, (a_{k-1}, b_{k-1},0)\},\\
  U_k &:= \{(a_0, b_0,1),(a_1, b_1,1), \dots, (a_{k-1}, b_{k-1},1)\}.
\end{align*}
So $Z_{n^2} \cup U_{n^2} = A^2 \times \{0,1\}$.
The first stage of our procedure computes the Cayley table of a \ldto for
$Z_1 \cup U_{n^2}:= \{(a_0, b_0, 0)\} \cup A^2 \times \{1\}$.
The second stage does the same for
$Z_2 \cup U_{n^2} := \{(a_1, b_1, 0) \} \cup Z_1 \cup U_{n^2}$.
This continues for $n^2$ stages, after which we obtain a Cayley table of a \ldto for
$Z_{n^2} \cup U_{n^2} = A^2 \times \{0,1\}$.
There are $n^2$ stages, each stage consisting of $n^2$ steps (described below),
and each step requiring a single call to Algorithm~\ref{alg:ld-2}.
Thus, the procedure makes $n^4$ calls to Algorithm~\ref{alg:ld-2}, and
the total running-time is on the order of $r^2m^2n^7 + n^9$.

The following is a list of steps taken in Stage 1 in order to compute a \ldto for
$\{(a_0, b_0,0),(a_0, b_0,1),(a_1, b_1,1), \dots, (a_{\ell}, b_{\ell},1)\}$:
\begin{enumerate}[1.]
  \item compute the table $\sanst_1$ of a \ldt op for $\{(a_0, b_0, 0), (a_0, b_0, 1)\}$;\\[-8pt]
  \item compute the table $\sanss_1$ of a \ldt op for $\{(a_0, b_0, 0), (\sanst_1[a_1, a_1, b_1], b_1, 1)\}$; \\
  form the table $\sanst_2[x,y,z] = \sanss_1\bigl[\sanst_1[x,y,z], \sanst_1[y,y,z], z\bigr]$
  ($\forall x,y,z$);
  \\[-8pt]
  \item compute the table $\sanss_2$ of a \ldt op for $\{(a_0, b_0, 0), (\sanst_2[a_2, a_2, b_2], b_2, 1)\}$; \\
    form the table $\sanst_3[x,y,z] = \sanss_2\bigl[\sanst_2[x,y,z], \sanst_2[y,y,z], z\bigr]$  ($\forall x,y,z$);\\
  $\vdots$
  \item[~\hskip-5mm$n^2$\!.] compute a table $\sanss_{\l}$ of a \ldt op for $\{(a_0, b_0, 0),  (\sanst_{\l}[a_{\l}, a_{\l}, b_{\l}], b_{\l}, 1)\}$; \\
  form the table $\sanst_{n^2}[x,y,z] = \sanss_{\l}\bigl[\sanst_{\l}[x,y,z], \sanst_{\l}[y,y,z], z\bigr]$
  ($\forall x,y,z$).
\end{enumerate}
Let $\sansd_1 := \sanst_{n^2}$ denote the final result of Stage 1. It is not hard to check that
$\sansd_1$ is the table of a \ldto for $Z_1 \cup U_{n^2}$. (See the proof of Theorem~\ref{thm:local-diff-terms}.)

Stage 2 is very similar to Stage 1; however, we first produce the table, $\sansd'_2$, of a \ldto for
$\{(a_1, \sansd_1[a_1, b_1, b_1], 0)\} \cup U_{n^2}$, and then form the table
$\sansd_2[x,y,z] := \sansd'_2\bigl[x, \sansd_1[x,y,y], \sansd_1[x,y,z]\bigr]$ ($\forall x,y,z \in A$),
which will be the table of a \ldto for $Z_2 \cup U_{n^2}$.
We continue in this way for $n^2$ stages, of $n^2$ steps each, until we reach
our goal: $\sansd:=\sansd_{n^2}$, the Cayley table of
a \ldto for $Z_{n^2} \cup U_{n^2} = A^2 \times \{0,1\}$.

Without further ado, here is a precise description of an algorithm that carries out any one
of the $n^2$ stages; the argument $(a,b)$ determines which stage is executed.
\RestyleAlgo{boxruled}
\begin{algorithm}
  \KwIn{A pair $(a,b) \in A^2$}
  \KwOut{The Cayley table of a \ldt op for $\{(a,b, 0)\} \cup U_{n^2}$.}

  Use Alg.~\ref{alg:ld-2} to compute the table $\sanst_1$ of a \ldto for $\{(a,b, 0), (a_0, b_0, 1)\}$\;

  \ForAll {$1\leq i < {n^2}$} {
    Use Alg.~\ref{alg:ld-2} to compute the table $\sanss_i$ of a \ldto for $\{(a,b, 0), (\sanst_i[a_i, a_i, b_i], b_i, 1)\}$\\[6pt]
    Form the table $\sanst_{i+1}$, defined as follows: $\forall x,y,z$,\\
    $\quad \sanst_{i+1}[x,y,z] = \sanss_i\bigl[\sanst_i[x,y,z], \sanst_i[y,y,z], z\bigr]$\;
  }

  \Return $\sanst_{n^2}$
  \caption{Return the Cayley table of a \ldto for $\{(a,b,0)\}\cup U_{n^2}$\label{alg:new-2-1}}
\end{algorithm}

\begin{lemma}
  \label{lem:19}
  The output of Algorithm~\ref{alg:new-2-1} is the Cayley table of a \ldto for
  $\{(a,b,0)\} \cup U_{n^2}$. 
\end{lemma}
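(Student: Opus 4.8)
The plan is to prove the lemma by induction on the loop index, carrying the invariant that for each $k$ with $1 \le k \le n^2$ the table $\sanst_k$ computed by Algorithm~\ref{alg:new-2-1} is the Cayley table of a \ldto for $\{(a,b,0)\} \cup U_k$. Since the algorithm returns $\sanst_{n^2}$ and since the enumeration $\{(a_0,b_0),\dots,(a_{\ell},b_{\ell})\}$ of $A^2$ gives $U_{n^2} = A^2 \times \{1\}$, the invariant at $k = n^2$ is exactly the assertion of the lemma. The observation that makes the induction run is that the table formula
\[
\sanst_{i+1}[x,y,z] = \sanss_i\bigl[\sanst_i[x,y,z], \sanst_i[y,y,z], z\bigr]
\]
is literally the operation $d(x,y,z) = q(p(x,y,z), p(y,y,z), z)$ used in the Case $\chi_\ell = 1$ part of the proof of Theorem~\ref{thm:local-diff-terms}, under the identifications $p = \sanst_i$ and $q = \sanss_i$. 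So rather than recompute anything, I would invoke that case of the theorem to supply the verifications, adapting it to the present situation in which the only $0$-triple is $(a,b,0)$.

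For the base case $k = 1$, Step~1 of the algorithm produces $\sanst_1$ as a \ldto for $\{(a,b,0),(a_0,b_0,1)\}$, and this set equals $\{(a,b,0)\} \cup U_1$. For the inductive step I would assume $\sanst_i$ is a \ldto for $\{(a,b,0)\} \cup U_i$ and note that, by construction, $\sanss_i$ is a \ldto for $\{(a,b,0),(\sanst_i[a_i,a_i,b_i],b_i,1)\}$; in particular $\sanss_i$ is a \ldto for the single triple $(a,b,0)$. The two \emph{easy} families of conditions concern the $1$-triples of $\{(a,b,0)\} \cup U_{i+1}$. For the newly incorporated triple $(a_i,b_i,1)$ one has $\sanst_{i+1}[a_i,a_i,b_i] = \sanss_i[\sanst_i[a_i,a_i,b_i],\sanst_i[a_i,a_i,b_i],b_i] = b_i$ directly from the condition defining $\sanss_i$; for an older triple $(a_j,b_j,1)$ with $j<i$ one uses the inductive hypothesis $\sanst_i[a_j,a_j,b_j] = b_j$ together with idempotence of $\sanss_i$ to get $\sanst_{i+1}[a_j,a_j,b_j] = b_j$.

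The step I expect to be the main obstacle is the single $0$-triple condition $a \comr{\thetaab} \sanst_{i+1}[a,b,b]$, since this is where the commutator enters. Here I would first use idempotence of $\sanst_i$ to write $\sanst_{i+1}[a,b,b] = \sanss_i[\sanst_i[a,b,b],b,b]$. Writing $\delta := \com{\thetaab}$, the inductive hypothesis gives $\sanst_i[a,b,b] \mathrel{\delta} a$, and because $\delta$ is a \emph{congruence} this relation is preserved coordinatewise by $\sanss_i$, so $\sanss_i[\sanst_i[a,b,b],b,b] \mathrel{\delta} \sanss_i[a,b,b]$; finally $\sanss_i[a,b,b] \mathrel{\delta} a$ because $\sanss_i$ is a \ldto for $(a,b,0)$. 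Composing the two $\delta$-relations yields $\sanst_{i+1}[a,b,b] \mathrel{\delta} a$, which is the required condition, completing the induction. The subtlety worth stressing is that this substitution is legitimate only because $\delta$ is the commutator congruence $\com{\thetaab}$ and not merely the generating pair $(a,b)$; this is precisely the feature on which the $\chi_\ell = 0$ and $\chi_\ell = 1$ cases of Theorem~\ref{thm:local-diff-terms} rely, and I would defer to that proof for the remaining bookkeeping.
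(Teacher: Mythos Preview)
Your proof is correct and is exactly what the paper intends: its own proof consists of the single sentence ``This follows from the proof of Theorem~\ref{thm:local-diff-terms},'' and you have faithfully unpacked that reference by identifying $p = \sanst_i$, $q = \sanss_i$, and $d = \sanst_{i+1}$ in the Case~$\chi_\ell = 1$ argument, with the single $0$-triple $(a,b,0)$ playing the role of the whole block $\{(a_0,b_0,0),\dots,(a_{k-1},b_{k-1},0)\}$. The invariant you carry and the three verification steps match the theorem's proof line for line.
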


\begin{proof}
  This follows from the proof of Theorem~\ref{thm:local-diff-terms}.
\end{proof}

\begin{algorithm}
\KwOut{$\sansd_{n^2}$, the Cayley table of a \ldt op for $A^2 \times \{0,1\}$}

  Use Alg.~\ref{alg:new-2-1} to compute the table $\sansd_1$ of a \ldt op for
  $Z_1 \cup U_{n^2}$\; 

  \ForAll {$1\leq k <{n^2}$} {

    Use Alg.~\ref{alg:new-2-1} compute the table $\sansd'_{k+1}$ of a \ldto for $\{(a_k, \sansd_{k}[a_k, b_k, b_k], 0)\} \cup U_{n^2}$;\\[4pt]
    Form the table $\sansd_{k+1}$, defined as follows: $\forall x,y,z$,\\
    $\quad \sansd_{k+1}[x,y,z] := \sansd'_{k+1}\bigl[x, \sansd_{k}[x,y,y], \sansd_{k}[x,y,z]\bigr]$\;
  }
  \Return $\sansd_{n^2}$
  \caption{Return the Cayley table of a difference term operation for $\alg{A}$\label{alg:new-2-3}}
\end{algorithm}

\begin{proposition}
  The output of Algorithm~\ref{alg:new-2-3} is the Cayley table of a \ldto for $A^2 \times \{0,1\}$, hence a difference term operation for $\alg{A}$.
\end{proposition}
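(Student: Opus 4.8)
The plan is to read Algorithm~\ref{alg:new-2-3} as a fully deterministic transcription of the inductive construction in the proof of Theorem~\ref{thm:local-diff-terms}, specialized to the fixed enumeration $\{(a_0,b_0),\dots,(a_\ell,b_\ell)\}$ of $A^2$. Accordingly I would argue by induction on $k$ that after the $k$-th pass of the outer loop the table $\sansd_k$ is the Cayley table of a \ldto for $Z_k\cup U_{n^2}$. The base case $k=1$ is exactly Lemma~\ref{lem:19}: the call to Algorithm~\ref{alg:new-2-1} on $(a_0,b_0)$ returns a \ldto for $\{(a_0,b_0,0)\}\cup U_{n^2}=Z_1\cup U_{n^2}$. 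Taking $k=n^2$ and using $Z_{n^2}\cup U_{n^2}=A^2\times\{0,1\}$ then yields that $\sansd_{n^2}$ is a \ldto for $A^2\times\{0,1\}$; that such a \ldto is a difference term operation for $\alg A$ is precisely the observation made inside the proof of Corollary~\ref{cor:loc-diff-term}.

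For the inductive step, assume $\sansd_k$ is a \ldto for $Z_k\cup U_{n^2}$. The pass producing $\sansd_{k+1}$ adjoins the single triple $(a_k,b_k,0)$, so the target set $Z_{k+1}\cup U_{n^2}$ arises from $Z_k\cup U_{n^2}$ by adding one triple of type $\chi=0$. This is exactly the situation of the case $\chi_\ell=0$ in the proof of Theorem~\ref{thm:local-diff-terms}, under the identifications $p=\sansd_k$ (the \ldto for the smaller set $S'=Z_k\cup U_{n^2}$) and $q=\sansd'_{k+1}$. The essential bookkeeping is to verify that the pair handed to Algorithm~\ref{alg:new-2-1} at this pass, namely $(a_k,\sansd_k[a_k,b_k,b_k])$, produces a \ldto for $\{(a_k,\sansd_k[a_k,b_k,b_k],0)\}\cup U_{n^2}$, and that this set coincides with the auxiliary set $S_0$ of that case—its replacement triple $(a_\ell,p(a_\ell,b_\ell,b_\ell),0)$ being instantiated by $a_\ell=a_k$, $b_\ell=b_k$, $p=\sansd_k$, and its retained $\chi=1$ part being all of $U_{n^2}$. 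One then checks that the composition rule $\sansd_{k+1}[x,y,z]=\sansd'_{k+1}[x,\sansd_k[x,y,y],\sansd_k[x,y,z]]$ is the table-level form of $d(x,y,z)=q(x,p(x,y,y),p(x,y,z))$. Granting these identifications, the three verifications of that case—for the new triple, for the retained $\chi=0$ triples of $Z_k$ (handled through $p$), and for the $\chi=1$ triples of $U_{n^2}$ (handled through $q$ together with idempotence of $p$)—transfer verbatim and establish that $\sansd_{k+1}$ is a \ldto for $Z_{k+1}\cup U_{n^2}$, closing the induction.

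I expect the only genuine obstacle to be this bookkeeping, which must reconcile the \emph{without loss of generality} reorderings of the theorem's proof with the rigidly fixed order in which Algorithm~\ref{alg:new-2-3} processes the enumeration: for each $k$ one must confirm that the algorithm retains precisely the $\chi=1$ triples $U_{n^2}$ and forms the single $\chi=0$ replacement triple correctly as $(a_k,\sansd_k[a_k,b_k,b_k],0)$, so that the hypotheses of the inductive case are literally met. Two caveats should also be recorded. First, Lemma~\ref{lem:19} must be invoked at every pass to guarantee that the inner calls to Algorithm~\ref{alg:new-2-1}, and hence to Algorithm~\ref{alg:ld-2}, return correct tables. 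Second, the statement is understood under the assumption that the algorithm actually returns a table rather than ``false''; this occurs exactly when every relevant two-element set has a \ldto, which—by Corollary~\ref{cor:loc-diff-term}, together with the fact that a difference term operation is a \ldto for each singleton—is equivalent to $\alg A$ possessing a difference term operation, and under that assumption all base-step calls succeed and the induction goes through.
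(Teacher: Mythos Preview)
Your proposal is correct and is essentially the same approach as the paper's one-line proof, which merely states that the result follows from Lemma~\ref{lem:19} and the proof of Theorem~\ref{thm:local-diff-terms}. You have simply unpacked this reference in detail, correctly identifying the inductive invariant (that $\sansd_k$ is a \ldto for $Z_k\cup U_{n^2}$), the match between Algorithm~\ref{alg:new-2-3}'s composition rule and the formula $d(x,y,z)=q(x,p(x,y,y),p(x,y,z))$ from the $\chi_\ell=0$ case, and the role of Lemma~\ref{lem:19} at each pass.
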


\begin{proof}
This follows from Lemma~\ref{lem:19} and the proof of Theorem~\ref{thm:local-diff-terms}.
\end{proof}

\section*{Acknowledgments}
The first and second authors were supported by the National
Science Foundation under Grant Numbers 1500218 and 1500235; the third author was supported by a grant from the Natural Sciences and Engineering Research Council of Canada.

\bibliographystyle{ws-ijac}
\bibliography{refs}

\end{document}